\newenvironment{preuve}{\noindent {\it Proof}}{\hfill$\square$}
\newcommand\sk{\smallskip}
\newcommand\de{\delta}
\newcommand\ep{\varepsilon}
\newcommand\ph{\varphi}
\newcommand\zu{[0,1]}
\newcommand{\ds}{\displaystyle}
\newcommand{\ho}{H\"older }
\newcommand{\al}{\alpha }
\newcommand{\rr}{{\mathbb{R}}}
\newcommand{\RR}{{\mathbb{R}}}
\newcommand{\R}{{\mathbb{R}}}
\newcommand{\intot}{\ds\int_0^t}
\newcommand{\sm}{{s-}}
\newcommand{\um}{{u-}}
\newcommand{\tm}{{t-}}
\newcommand{\indiq}{{\bf 1}}
\newcommand{\ala}{\nonumber \\}
\newcommand{\e}{\varepsilon}
\newcommand{\nk}{{n_k}}
\newcommand{\cL}{{\mathcal L}}
\newcommand{\cG}{{\mathcal G}}
\newcommand{\cP}{{\mathcal P}}
\newcommand{\cS}{{\mathcal S}}
\newcommand{\cH}{{\mathcal H}}
\newcommand{\cJ}{{\mathcal J}}
\newcommand{\cF}{{\mathcal F}}
\newcommand{\cZ}{{\mathcal Z}}
\newcommand{\cB}{{\mathcal B}}
\newcommand{\E}{{\mathbb{E}}}
\newtheorem{theo}{Theorem}[]
\newtheorem{pro}[theo]{Proposition}
\newtheorem{remark}[theo]{Remark}
\newtheorem{lemma}[theo]{Lemma}
\newtheorem{defi}[theo]{Definition}
\begin{document}

\title{A pure jump Markov process with a random singularity  spectrum}

\author{Julien Barral}
\author{Nicolas Fournier}
\author{St\'ephane Jaffard}
\author{St\'ephane Seuret}
\address{Julien Barral: INRIA, Domaine de Voluceau Rocquencourt,
78153 Le Chesnay Cedex, France.}
\email{julien.barral@inria.fr}
\address{Nicolas Fournier, St\'ephane Jaffard, St\'ephane Seuret:
Laboratoire d'Analyse et de Math\'ematiques Appliqu\'ees - CNRS UMR 8050 - Universit\'e Paris-Est - UFR   Sciences et Technologie - 
61, avenue du G\'en\'eral de Gaulle, 94010 Cr\'eteil Cedex, France}
\email{nicolas.fournier@univ-paris12.fr}
\email{jaffard@univ-paris12.fr}
\email{seuret@univ-paris12.fr}

\begin{abstract}
\noindent 
We construct a  non-decreasing 
pure jump Markov process, whose jump
measure heavily depends  on the values taken by the process.
We determine the singularity spectrum of this process, which
turns out to be random and to depend locally on the values taken by the process. The result relies on fine properties of the distribution of Poisson point processes and on  ubiquity theorems.
\end{abstract}

\keywords{Singularity spectrum, Hausdorff dimension,
Markov processes, Jump processes, Stochastic differential equations, 
Poisson measures.}

\subjclass[2000]{60J75, 28A78, 28A80.}

\maketitle

\section{Introduction }

Up to the mid-70s, the study of the H\"older regularity of the sample paths of stochastic processes was focused on  two main issues: the determination of their uniform modulus of continuity, and  the existence of an almost-everywhere pointwise modulus of continuity.  However, the first indications that their pointwise regularity could vary from point to point  in a  subtle  way,   appeared  in the works of Orey-Taylor \cite{ot} and Perkins \cite{perkins}, who showed that the
fast and  slow points of Brownian motion are located on random fractal sets. Furthermore, they determined the   Hausdorff dimensions of these sets.  Brownian motion, however, only displays  very slight changes in its modulus of continuity (which is modified  only by logarithmic corrections).  This is in sharp contrast with other types of processes, such as L\'evy processes for instance, whose modulus of continuity changes completely from point to point.  Let us  recall the  relevant definitions  related with  pointwise H\"older  regularity,  in this context.

\begin{defi}  \label{defpoint}  Let $f: \RR_+  \rightarrow  \RR $ 
be a { locally   bounded function},  $t_0 \in   \RR_+$ and  let $\al > 0$. 
The function  $f $ belongs to $ C^\alpha (t_0)$ if there exist $C>0$ and a polynomial  $P_{t_0}$ of degree less than  $\alpha$ such that 
for all $t$ in a neighborhood of $t_0$,
\[ |f(t) -P_{t_0}(t) | \leq C | t-t_0|^\alpha. \] 
The {H\"older exponent} of $f$ at $t_0$ is (here $\sup \emptyset =0$)
\[ h_f(t_0) = \sup \{ \alpha > 0 : \hspace{2mm} f \in C^\alpha (t_0)\} . \]  
\end{defi}

The   level sets of the pointwise exponent of  the  H\"older exponent are  called  the  {\em iso-H\"older sets} of $f$,  and  defined, for $h\geq 0$, by 
\[ E_f(h)  = \{ t \geq 0: \hspace{2mm}  h_f(t) = h \} .  \]
The corresponding   notion of  ``multifractal function'' was  put into light by Frisch and Parisi \cite{FP}, who introduced the definition of the {\em spectrum of singularities} of a function $f$.

\begin{defi}   Let $f: \RR_+  \rightarrow  \RR $  be a 
locally   bounded function. 
The spectrum of singularities of $f$ is the function $D_f$ defined,
for $h\geq 0$, by 
\[ D_f (h) = \dim_H (E_f(h) ) .\]
We also define, for any open set 
$A \subset \RR_+$, $D_f (A,h) = \dim_H (E_f(h)\cap A)$.
\end{defi}

The definition of the Hausdorff dimension can be found in
Falconer \cite{f} for instance (by convention  $\dim_H \emptyset= -\infty$). The singularity spectrum of $f$  describes  the geometric repartition of its H\"older singularities, and encapsulates a geometric information which is usually more meaningful  than the H\"older exponent. 

Following the way opened by Frisch and Parisi, the spectrum of singularities of large classes of stochastic processes (or random measures, in which case an appropriate notion of H\"older pointwise regularity for measures is used) have been determined.  Most examples of stochastic processes $f$ which have been studied  display the following remarkable features.

\begin{itemize}
\item Though the iso-H\"older sets are random, the spectrum of singularities is deterministic: for some deterministic function $\Delta$, a.s., for all
$h\geq 0$, $D_f (h) = \Delta (h)$.
\item The spectrum of singularities of $f$ is {\em homogeneous}: a.s.,
for any nonempty open subset $A \subset \RR_+$, for all $h\geq 0$, 
$D_f(A,h)=\Delta(h)$.
\end{itemize}

Though it is easy to construct artificial {\it ad hoc} processes that do not satisfy these properties, it is remarkable that  many ``natural'' processes of very different kind  follow this rule:   L\'evy processes \cite{j}, L\'evy processes in multifractal time \cite{BS1}, fractional Brownian motions,
random self-similar measures and random Gibbs measures \cite{BMP}, Mandelbrot cascades \cite{BA1}, Poisson cascades \cite{BA2}, among many other examples.
See however \cite{durand} where Durand constructed  a  counterexample   whose wavelet coefficients are defined using Markov trees.


%


In this paper we will investigate  the regularity properties  of some Markov processes.  Our purpose at this stage is not to obtain results in the most  general form, but rather to consider some specific examples, and check that  such processes indeed display  a random spectrum, which is not homogeneous.  
Note that, until now, the only  Markov processes which have    been analyzed from the multifractal standpoint  are the  L\'evy processes. 

We now introduce a  new notion, the  {\em  local spectrum}, which  is tailored to the study of functions with non-homogeneous spectra. 


\begin{defi}  \label{defpoint2}  Let $f: \RR_+  \rightarrow  \RR $ be a { locally   bounded function}, $t_0 \in   \RR_+$ and  let  $(V_n)_{n\geq 1}$ 
be a basis of neighborhoods of $t_0$.  The local spectrum of $f$ at $t_0$ is  the function
\begin{equation*} 
\hbox{ for all $h\geq 0$, } 
D_f (t_0, h) = \lim_{n\rightarrow \infty} D_f (V_n, h). 
\end{equation*}
\end{defi}

By monotonicity    (if $A \subset B$, then $ D_f (A, h) \leq  D_f (B, h)$), the limit exists and it  is independent of the particular basis chosen. 
Clearly, a  function has a homogeneous spectrum if and only if 
for all $h\geq 0$,  $D_f (t, h)$ 
is independent of $t\geq 0$.
The local spectrum   
allows one to recover  the  spectrum of all possible restrictions of $f$
on an open interval.

\begin{lemma}\label{spectresup}
Let $f: \RR_+ \mapsto \RR$ be a locally bounded function.
Then for any open interval $I=(a,b)\subset \RR_+$, for any $h\geq 0$, we have 
$  D_f (I,h) = \sup_{ t\in I} D_f (t, h)$.
\end{lemma}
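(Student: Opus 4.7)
The plan is to prove the two inequalities separately. The easy direction is $\sup_{t \in I} D_f(t,h) \leq D_f(I,h)$: for each $t \in I$, since $I$ is open, any basis of neighborhoods $(V_n)$ of $t$ satisfies $V_n \subset I$ for $n$ large enough, hence by monotonicity $D_f(V_n,h) \leq D_f(I,h)$, and letting $n\to\infty$ gives $D_f(t,h) \leq D_f(I,h)$.

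For the reverse inequality $D_f(I,h) \leq \sup_{t \in I} D_f(t,h)$, the strategy is to cover $I$ by countably many open neighborhoods on which the dimension of the iso-Hölder set is close to the local spectrum, and then invoke the countable stability of Hausdorff dimension. More precisely, fix $\ep>0$, and for each $t\in I$ choose a basis of open neighborhoods $(V_n(t))_{n\geq 1}$ of $t$ contained in $I$ (which is possible since $I$ is open). By definition of $D_f(t,h)$ as a limit of the non-increasing sequence $D_f(V_n(t),h)$, there exists $n(t)$ such that the open neighborhood $W(t) := V_{n(t)}(t) \subset I$ satisfies
\[
D_f(W(t),h) \leq D_f(t,h)+\ep \leq \sup_{s \in I} D_f(s,h)+\ep.
\]
The family $\{W(t) : t \in I\}$ is an open cover of $I \subset \RR$. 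Since $\RR$ is second countable (Lindelöf), we can extract a countable subcover $\{W(t_k) : k \geq 1\}$, so that $E_f(h) \cap I \subset \bigcup_{k \geq 1}(E_f(h) \cap W(t_k))$.

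By the countable stability of Hausdorff dimension, we obtain
\[
D_f(I,h) = \dim_H(E_f(h) \cap I) \leq \sup_{k \geq 1} \dim_H(E_f(h) \cap W(t_k)) = \sup_{k\geq 1} D_f(W(t_k),h) \leq \sup_{s \in I} D_f(s,h) + \ep.
\]
Letting $\ep \to 0$ yields the desired inequality, with the usual convention $\dim_H \emptyset = -\infty$ handling the case where $E_f(h)\cap I=\emptyset$ (then both sides equal $-\infty$, since the local spectrum is $-\infty$ at every point of $I$). The only slightly delicate step is the use of the Lindelöf property to reduce to a countable cover, which is necessary because without countability the sup of dimensions of the pieces need not bound the dimension of the union; everything else is a direct unpacking of the definitions.
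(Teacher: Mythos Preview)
Your proof is correct. Both arguments hinge on covering $I$ by neighborhoods on which the dimension is controlled and then invoking the countable stability of Hausdorff dimension, but the way you obtain a countable cover differs from the paper. The paper argues by contradiction: if every $t\in I$ had a neighborhood $V_t$ with $D_f(V_t,h)<\delta-\ep$, then any compact $K\subset I$ admits a \emph{finite} subcover by such $V_t$'s, giving $D_f(K,h)\le\delta-\ep$, and exhausting $I$ by compacts yields $D_f(I,h)\le\delta-\ep$. You instead work directly, choosing for each $t$ a neighborhood $W(t)$ with $D_f(W(t),h)\le D_f(t,h)+\ep$ and extracting a countable subcover via the Lindel\"of property. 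Your route is arguably cleaner (no contradiction, no intermediate compact exhaustion), while the paper's compactness argument avoids explicit appeal to second countability; but the underlying idea is the same.
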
  

\begin{proof} Let thus $h\geq 0$ be fixed.
First, it is obvious that for any $t\in I$, $D_f (t, h)\leq D_f(I,h)$,
since for $(V_n)_{n\geq 1}$ a basis of neighborhoods of $t$, $V_n\subset I$ for
$n$ large enough. Next, set $\delta=D_f(I,h)$, and consider $\e>0$.
We want to find $t \in I$ such that for all neighborhood $V$ of $t$,
$D_f(V,h) \geq \delta -\e$. Assume by contradiction
that for any $t \in  I$, there is a neighborhood
$V_t$ of $t$ such that $D_f(V_t,h) < \delta -\e$. 
One easily deduces that for any compact $K\subset I$, 
$D_f(K,h)\leq \delta-\e$ (use a finite covering of $K$ by the $V_t$'s).
This would of course imply that $D_f(I,h)\leq \delta-\e<\delta$.
\end{proof}

Let us now recall the multifractal  nature of  L\'evy processes without Brownian component.  In that case, the spectrum of singularities only depends on one parameter, the { \em lower index} of Blumenthal and Getoor,  which quantifies the ``density'' of  small jumps, and is defined, 
for  any non-negative  measure   $\nu$ on $\rr$ satisfying $\int_{-1}^1 u^2\nu(du)<\infty$,
by \begin{equation*} 
\beta_\nu := \inf \left\{ \alpha\geq 0: \int_{-1}^1 |u|^\alpha \nu(du)<\infty
\right\} . 
\end{equation*}
Note that the integrability condition implies that $\beta_\nu\in [0,2]$.
The following result of  \cite{j} yields the spectrum of singularities of  such 
L\'evy processes.

\begin{theo}
\label{thjaff} Let $(X_t)_{t\geq 0}$ be a L\'evy process without Brownian 
component, with L\'evy measure $\nu$. 
If $\beta_{\nu }\in(0,2)$, then 
the singularity spectrum 
of $X$ is homogeneous and deterministic:
a.s., for all $t\geq 0$, for all $h\geq 0$,
\begin{equation*}
D_X(t,h)=D_X(h)= \begin{cases} h \cdot \beta_{\nu }    &  \mbox{ if } \  0 \leq  h\leq 1/\beta_{\nu} , \\  - \infty &  \mbox{ if } \ h > 1/\beta_{\nu }.\end{cases}
\end{equation*}
\end{theo}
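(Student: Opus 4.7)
The plan is to exploit the L\'evy-It\^o decomposition of $X$: writing $X_t = ct + \int_0^t\int_{|z|\leq 1} z\,\widetilde N(ds,dz) + \sum_{s\leq t,\,|\Delta X_s|>1}\Delta X_s$ where $N$ is a Poisson random measure on $(0,\infty)\times\RR$ with intensity $ds\otimes\nu(dz)$, the drift and the large-jump compound-Poisson piece contribute only a smooth piece plus finitely many jumps on each bounded interval. Consequently, the pointwise H\"older regularity outside the (countable) set of jump times is governed entirely by the compensated small-jump integral. For each dyadic scale $j\geq 1$ set $A_j = \{z : 2^{-j-1}<|z|\leq 2^{-j}\}$; from $\beta_\nu\in(0,2)$ and the definition of the Blumenthal-Getoor index one has $\nu(A_j)\approx 2^{j\gamma}$ for every $\gamma$ close to $\beta_\nu$ and $j$ large. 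The jump times of $N$ over $\RR_+\times A_j$ thus form, on any bounded interval, a Poisson point process with intensity of that order.

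Given $t_0 \in \RR_+$, introduce the approximation rate
\begin{equation*}
\delta(t_0) := \limsup_{j\to\infty}\,\frac{-\log\,\mathrm{dist}\bigl(t_0,\ \{\text{jump times of } N \text{ in } A_j\}\bigr)}{j\log 2},
\end{equation*}
so that $\delta(t_0)\geq \beta_\nu$ a.s.\ for every $t_0$ that is not a jump of $X$. The central claim is $h_X(t_0) = 1/\delta(t_0)$ almost surely. The upper bound is direct: a jump of size $\approx 2^{-j}$ at distance $\approx 2^{-j\delta}$ from $t_0$ forces an oscillation $\approx 2^{-j}$ of $X$ over a window of length $\approx 2^{-j\delta}$, ruling out $C^\alpha(t_0)$ for $\alpha>1/\delta$. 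The lower bound is the technical core: one decomposes the martingale dyadically, uses a Borel-Cantelli argument to guarantee that no jump of size $\geq 2^{-j}$ lies closer than $2^{-j\delta(t_0)(1-\e)}$ to $t_0$ for $j$ large, and controls the compensated remainder via $\int_{|z|\leq 1}|z|^{\beta_\nu+\e}\nu(dz)<\infty$ together with Kolmogorov-type continuity estimates for the martingale.

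Dimensions are then obtained from a \emph{ubiquity} (Jarn\'ik-type) theorem applied to the random limsup set $E^\delta := \{t_0:\delta(t_0)\geq\delta\}$, which, for each $\delta\geq \beta_\nu$, is the $\limsup$ as $j\to\infty$ of the $2^{-j\delta}$-neighbourhoods of a Poisson point process of intensity $\approx 2^{j\beta_\nu}$. The ubiquity theorem yields $\dim_H E^\delta = \beta_\nu/\delta$ almost surely for $\delta\geq\beta_\nu$, while $E^\delta$ has full Lebesgue measure on every interval for $\delta\leq\beta_\nu$ (so the generic H\"older exponent equals $1/\beta_\nu$). Consequently $\dim_H E_X(h) = h\beta_\nu$ for $h\in (0,1/\beta_\nu]$ and $E_X(h)=\emptyset$ for $h>1/\beta_\nu$. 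Homogeneity then follows from the translation invariance in time of the Poisson intensity: the preceding statement holds simultaneously in every rational subinterval of $\RR_+$, and Lemma \ref{spectresup} promotes this to the uniform formula for $D_X(t,h)$. The main obstacle is the lower H\"older bound, which demands a uniform-in-$t_0$ control of the compensated martingale; this is precisely where $\beta_\nu<2$ is essential, providing the integrability needed to dominate the small-jump fluctuations at the required H\"older scale.
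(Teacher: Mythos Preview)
This theorem is not proved in the present paper; it is quoted from~\cite{j} as background, and the only comment the paper adds is that homogeneity, though not stated explicitly in~\cite{j}, follows from that proof by stationarity of increments.

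Your sketch does reproduce the strategy of~\cite{j}, which is also the template this paper adapts for the process~$M$ in Sections~\ref{locreg} and~\ref{low}: encode $h_X(t_0)$ as the reciprocal of an approximation rate of $t_0$ by the Poisson system of jump times, obtain the upper bound on $h_X$ from nearby jumps (cf.\ Lemma~\ref{lemold} here), the lower bound via a Borel--Cantelli argument together with uniform control of the small-jump remainder, and then compute dimensions of the level sets through a Jarn\'ik-type ubiquity theorem on the associated limsup sets.

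Two places where your outline is loose. First, the assertion $\nu(A_j)\approx 2^{j\gamma}$ for $\gamma$ near $\beta_\nu$ does not follow from the definition of the Blumenthal--Getoor index: the index only controls the sums $\sum_j 2^{-j\gamma}\nu(A_j)$, not individual terms $\nu(A_j)$, and~\cite{j} in fact imposes a mild additional summability condition on~$\nu$ which the statement recalled here suppresses. Second, when $\beta_\nu\geq 1$ the small jumps are not absolutely summable, and the lower H\"older bound demands more than a generic ``Kolmogorov-type continuity estimate''; in~\cite{j} it is obtained through a dyadic decomposition and a wavelet characterization of pointwise regularity, which is where the substantive work lies. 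Your outline is correct in spirit but glosses over precisely these two points.
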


Let us make a few  observations.
It is not stated explicitly in  \cite{j} that the spectrum of a L\'evy process is homogeneous, but it is  a direct consequence of  the proof. Indeed, the spectrum is computed on a arbitrary interval, and the stationarity 
of the increments implies  that it does not depend on the particular location of this interval. 
Although a L\'evy process  is random, its spectrum is almost surely deterministic. As explained above, this is the  situation usually met
when performing the multifractal analysis of random processes or random measures possessing either stationarity or  scaling invariance properties.

The purpose of this paper is to  investigate how these results are modified when the stationarity assumption is dropped. We will give    examples of  Markov processes whose singularity spectra  are non-homogeneous and random.

\section{Statement of  the main result}


A quite general class of one-dimensional Markov processes consists of
stochastic differential equations (S.D.E.) with jumps, 
see Ethier-Kurtz \cite{ek}, Ikeda-Watanabe \cite{iw}.
Since the Brownian motion is mono-fractal, the Brownian part of such
a process will not be very relevant.
Thus in order to avoid technical difficulties,
we consider
a jumping S.D.E. without Brownian and drift part, starting e.g. from $0$,
and with jump measure $\nu(y,du)$ (meaning that 
when located at $y$, the process 
jumps to $y+u$ at rate $\nu(y,du)$).
Again to make the study as simple as possible, we will
assume that the process has finite variations, and even that it is increasing 
(that is, $\nu(y,(-\infty,0))=0$ for all $y \in \rr$). 
Classically, a necessary condition for the process to be well-defined is that
$\int_0^\infty u \,\nu(y,du)<\infty$.

\sk

If ${\nu }$ is chosen so that the index $ \beta_{{\nu }(y,.)}$ is constant with respect to $y$,  then we expect that $ D_M(t,h)$ will be deterministic and independent of $t$.
We thus impose that the index of the jump measure
depends on the  value
$y$ of the process. The most natural example of such a situation 
consists in choosing
\begin{equation*}
{\nu_\gamma}(y,du):= \gamma(y) u^{-1-\gamma(y)} \indiq_{[0,1]}(u)du,
\end{equation*}
for some function $\gamma: \rr \mapsto (0,1)$.  The lower exponent of this family of measures is 
$$
\forall \, y\geq 0, \ \ \\ \ \ \  \beta_{{\nu_\gamma}(y,.)}=\gamma(y).
$$

We will make the following assumption

\begin{equation*}
({ \mathcal H} ) \hspace{5mm} \begin{cases}  \mbox{There exists $\e>0$  such that $\gamma:[0,\infty)  \longmapsto [\ep,1-\ep]$ }
\\\mbox{ is a Lipschitz-continuous strictly increasing  function.}\end{cases}
\end{equation*}

We impose a monotonicity condition for simplicity. 
The global Lipschitz condition could be slightly relaxed, as well as
the uniform bounds.

\begin{pro}\label{exi}
Assume  that $({ \mathcal H} )$ holds.  There exists a strong Markov process
$M=(M_t)_{t\geq 0}$   starting from $0$,   increasing and c\`adl\`ag,
and with generator $\cL$
defined for all $y \in [0,\infty)$ and for any function 
$\phi: [0,\infty) \mapsto \rr$ 
Lipschitz-continuous by
\begin{equation}\label{giok}
\cL \phi(y) = \int_0^1 [\phi(y+u)-\phi(y)] 
{\nu_\gamma}(y,du).
\end{equation}
Almost surely, this process is  continuous except on  a countable number of jump times. We denote
by $\cJ $ the set of its jump times, that is $\cJ=\{t >0: \Delta M(t)\ne 0\}$.
Finally, $\cJ $ is dense in $[0,\infty)$.
\end{pro}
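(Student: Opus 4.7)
The plan is to realise $M$ as the unique strong solution of a pure-jump SDE driven by a Poisson random measure, and then to read off all the stated properties. Let $N(ds,du,dz)$ be a Poisson random measure on $[0,\infty)\times(0,1]\times[0,\infty)$ with intensity $ds\,du\,dz$, carried by some filtered probability space, and set $\lambda(y,u):=\gamma(y)u^{-1-\gamma(y)}$. I would define $M$ as the non-decreasing c\`adl\`ag solution to
\begin{equation*}
M_t=\intot\!\int_0^1\!\int_0^\infty u\,\indiq_{\{z\leq\lambda(M_{\sm},u)\}}\,N(ds,du,dz).
\end{equation*}
Since the jumps are non-negative, the raw (uncompensated) Poisson integral is meaningful, and the thinning mechanism $z\leq\lambda(M_{\sm},u)$ produces exactly the jump rate $\nu_\gamma(M_{\sm},du)$.

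\sk

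For \textbf{pathwise uniqueness}, given two solutions $M,M'$ one writes
\begin{equation*}
\E|M_t-M'_t|\leq \intot\E\int_0^1 u\,|\lambda(M_{\sm},u)-\lambda(M'_{\sm},u)|\,du\,ds,
\end{equation*}
using that $|\indiq_{\{z\leq a\}}-\indiq_{\{z\leq b\}}|$ integrates in $z$ to $|a-b|$. The Lipschitz assumption on $\gamma$ gives $|\partial_y\lambda(y,u)|\leq C\,u^{-1-\gamma(y)}(1+|\ln u|)$; multiplying by $u$ and using $\gamma(y)\leq 1-\ep$ yields $\int_0^1 u\,|\partial_y\lambda(y,u)|\,du\leq C'\int_0^1 u^{-\gamma(y)}(1+|\ln u|)\,du<\infty$, uniformly in $y$. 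Gronwall's lemma then forces $M=M'$ a.s. \textbf{Existence} follows by the Picard iteration $M^0\equiv 0$, $M^{n+1}_t:=\intot\!\int_0^1\!\int_0^\infty u\,\indiq_{\{z\leq\lambda(M^n_{\sm},u)\}}N(ds,du,dz)$: the same estimate gives $\E|M^{n+1}_t-M^n_t|\leq (Ct)^n\E[M^1_t]/n!$, and $\E[M^1_t]=t\,\gamma(0)/(1-\gamma(0))<\infty$, so $M^n\to M$ in $L^1$ and the limit satisfies the SDE. Non-decreasingness, c\`adl\`ag regularity and $M_0=0$ are built into the construction.

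\sk

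The \textbf{Markov and strong Markov properties} then follow in the standard way from pathwise uniqueness together with the stationarity and independence of increments of $N$ (see Ikeda-Watanabe \cite{iw}). The \textbf{generator formula} \eqref{giok} is read off by applying It\^o's formula for pure-jump semimartingales to $\phi(M_t)$, which identifies $\int_0^t\cL\phi(M_s)\,ds$ as the compensator of $\phi(M_t)-\phi(0)$; integrability of $\cL\phi$ for Lipschitz $\phi$ is ensured by the uniform bound $\int_0^1 u\,\nu_\gamma(y,du)\leq (1-\ep)/\ep$. Finally, for the \textbf{density of $\cJ$}, I would observe that for $u\in(0,1]$ the map $\gamma\mapsto u^{-1-\gamma}$ is non-decreasing, so $\lambda(M_{\sm},u)\geq \ep\,u^{-1-\ep}$, and the number of jumps of $M$ in any non-trivial interval $(a,b)$ is bounded below by $N\bigl((a,b)\times\{(u,z):z\leq\ep u^{-1-\ep}\}\bigr)$, a Poisson variable with intensity $(b-a)\int_0^1\ep\,u^{-1-\ep}du=+\infty$, hence a.s.\ infinite.

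\sk

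The main technical point to watch is the logarithmic factor $|\ln u|$ appearing in $\partial_y\lambda$, caused by $\gamma(y)$ sitting in the exponent of $u^{-1-\gamma(y)}$. It is absorbed by the extra factor $u$ coming from the size of the jumps, together with the uniform bound $\gamma(y)\leq 1-\ep$ that keeps $\int_0^1 u^{-\gamma(y)}|\ln u|\,du$ finite; this is precisely why assumption $(\mathcal{H})$ forces $\gamma$ to take values in $[\ep,1-\ep]$ rather than merely in $(0,1)$.
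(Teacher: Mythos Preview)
Your argument is correct, but it takes a genuinely different route from the paper. You build $M$ by \emph{thinning} a three-dimensional Poisson measure, keeping $u$ as the jump size and using an auxiliary coordinate $z$ to select jumps at rate $\lambda(M_\sm,u)$. The paper instead performs the substitution $u=(1+z)^{-1/\gamma(y)}$ in the generator, rewriting it as $\cL\phi(y)=\int_0^\infty[\phi(y+G(\gamma(y),z))-\phi(y)]\,dz$ with $G(\beta,z)=(1+z)^{-1/\beta}$, and then realises $M$ from a two-dimensional Poisson measure $N(ds,dz)$ with intensity $ds\,dz$ via $M_t=\int_0^t\int_0^\infty G(\gamma(M_\sm),z)\,N(ds,dz)$. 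Their Lipschitz estimate is exact, $\int_0^\infty|G(\gamma(x),z)-G(\gamma(y),z)|\,dz=\big|\tfrac{\gamma(x)}{1-\gamma(x)}-\tfrac{\gamma(y)}{1-\gamma(y)}\big|$, so no logarithmic term appears. More importantly, this representation is not merely a device for existence: it couples $M$ with the one-parameter family of L\'evy subordinators $X^\alpha_t=\int_0^t\int_0^\infty G(\alpha,z)\,N(ds,dz)$, all sharing the same jump times $\{T_n\}$ and satisfying $(\Delta M_{T_n})^{\gamma(M_{T_n-})}=(\Delta X^\alpha_{T_n})^\alpha=(1+Z_n)^{-1}$. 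This coupling drives the entire regularity analysis downstream (Proposition~\ref{comp}, Theorem~\ref{deltat}). Your thinning construction proves Proposition~\ref{exi} cleanly and is arguably more transparent at this stage, but it does not directly furnish that coupling; continuing with the paper you would eventually need to switch representations.
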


Here and below, $\Delta M_t=M_t - M_{t-}$, 
where $M_{t-} = \displaystyle\lim_{s\to t, \, s<t} M_s$.
Proposition \ref{exi} will be checked
in Section \ref{poisson}, by using a Poisson 
S.D.E. 
This representation of $M$ will be useful for its local regularity 
analysis in the next sections.

%


%

\smallskip

The following  theorem summarizes multifractal  features of $M$. 


%
%



\begin{theo}\label{mr1}
Assume $(\cH)$ and consider the process $M$ 
constructed  in Proposition \ref{exi}. 
Then, the following properties hold  almost surely.  
\begin{enumerate}
\item[(i)]
For every $t\in (0,\infty)\backslash \cJ$, 
the local spectrum of $M$ at $t$  is  given by 
\begin{equation}\label{localspecM}
D_M(t,h)  = \begin{cases} h \cdot  \gamma(M_t)     &  \mbox{ if } \  0\leq h\leq 1/\gamma(M_t)  , \\  
- \infty &  \mbox{ if } \ h > 1/\gamma(M_t),\end{cases}
\end{equation}
while for $t\in \cJ$, 
\begin{equation}\label{localspecM2}
D_M(t,h)  = \begin{cases} h \cdot \gamma(M_t)      &  \mbox{ if } \  0\leq h<1/\gamma(M_t)  , \\ 
h \cdot  \gamma(M_{t-})    &  \mbox{ if } \  h\in [1/\gamma(M_t), 1/\gamma(M_{t-})]  ,\\ - \infty &  \mbox{ if } \ h > 1/\gamma(M_{t-}).\end{cases}
\end{equation}
\item[(ii)]
The spectrum of $M$ on any interval $I=(a,b)\subset (0,+\infty)$  is  
\begin{eqnarray}
\label{mfs1} \forall  h \geq 0, \ \ \  D_M(I,h) & =&          \sup\Big \{h\cdot  \gamma(M_t): \, t\in I, \ h\cdot  \gamma(M_t) <1\Big\} \\
\label{mfs1.5} & =&  \sup\Big \{h\cdot  \gamma(M_\sm): \, s \in \cJ\cap I, \ h\cdot  \gamma(M_\sm) <1\Big\} .\end{eqnarray}
In \eqref{mfs1} and \eqref{mfs1.5}, we adopt the convention that $\sup \emptyset =-\infty$.
\end{enumerate}
\end{theo}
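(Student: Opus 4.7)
The strategy is to establish (i) first, using the Poisson representation of $M$ provided by Proposition \ref{exi}, and then to deduce (ii) via Lemma \ref{spectresup}. In that representation, the jumps of $M$ form a marked point process $\{(T_i,U_i)\}_{i\in\N}$ such that, conditionally on $\cF_{T_i-}$, the jump size $U_i$ is distributed according to $\nu_\gamma(M_{T_i-},du)$. Since $\gamma\circ M$ is c\`adl\`ag with values in $[\e,1-\e]$ and continuous between jumps, the intensity of the jump process is nearly constant on small time windows on which $M$ itself varies little; this fact is the backbone of both the upper and lower bound arguments.

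For the \emph{upper bound} I first establish the pointwise estimate
\[ h_M(s)\le \min\!\bigl(1/\gamma(M_s),\,1/\gamma(M_\sm)\bigr)\qquad\text{for all } s\in(0,\infty)\ \text{a.s.} \]
If $(T_n,U_n)$ is a sequence of jumps with $T_n\to s$ from one side, the size-$U_n$ discontinuity at $T_n$ forces $h_M(s)\le \liminf_n \log U_n/\log|T_n-s|$, and a Borel--Cantelli argument on dyadic scales, exploiting the local intensity $\nu_\gamma(M_{s\pm},du)$, produces for every $\eta>0$ infinitely many such jumps on each side with $U_n\ge |T_n-s|^{1/\gamma(M_{s\pm})+\eta}$. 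To pass from this pointwise bound to a bound on $D_M(\cdot,h)$, I partition an interval $I$ into finitely many sub-intervals on which $\gamma\circ M$ oscillates by at most $\eta$ and apply on each piece the standard covering estimate that controls the number of jumps of size $\ge \e$; letting $\eta\to 0$ and specializing to shrinking neighborhoods of $t_0$ yields the upper halves of \eqref{localspecM} and \eqref{localspecM2}.

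The \emph{lower bound} is the more delicate step and relies on ubiquity. For $t_0\in(0,\infty)\setminus\cJ$ and $\eta>0$ given, I choose a neighborhood $V$ of $t_0$ so small that $\gamma(M_s)\in[\gamma(M_{t_0})-\eta,\gamma(M_{t_0})+\eta]$ for every $s\in V$ (possible by continuity of $\gamma\circ M$ at $t_0$). On $V$ the marked jump process has intensity uniformly comparable to the pure L\'evy intensity $\nu_\gamma(M_{t_0},du)$, and so a ubiquity theorem in the spirit of \cite{j,BS1,BMP} applied to this nearly Poissonian marked process yields, for every $h\in[0,1/(\gamma(M_{t_0})+\eta))$,
\[ \dim_H\bigl\{s\in V:\,h_M(s)=h\bigr\}\ge h\bigl(\gamma(M_{t_0})-\eta\bigr). \]
Shrinking $V$ (letting $\eta\to 0$) gives $D_M(t_0,h)\ge h\gamma(M_{t_0})$, matching the upper bound. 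At $t\in\cJ$ the same argument is run separately on $(t-\de,t)$ and on $(t,t+\de)$; since $M_\tm<M_t$ and $\gamma$ is strictly increasing, the two sides contribute the distinct exponents $1/\gamma(M_\tm)$ and $1/\gamma(M_t)$ and combine into the three-piece formula \eqref{localspecM2}. Part (ii) then follows from Lemma \ref{spectresup}: substituting (i) into $D_M(I,h)=\sup_{t\in I}D_M(t,h)$ and exploiting the density of $\cJ$ together with the c\`adl\`ag increasing character of $M$ shows that \eqref{mfs1} and \eqref{mfs1.5} coincide.

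The main obstacle is making the ubiquity lower bound rigorous in the presence of a \emph{process-dependent} intensity: the classical ubiquity theorems are stated for Poisson point processes of fixed deterministic intensity, whereas here the instantaneous jump intensity at time $s$ depends measurably on the very jumps under study. I would resolve this by a two-step localization: first in time, restricting to a small window on which $M$ has small oscillation, and then in space, conditioning on the (positive-probability) event that $M$ stays within a narrow vertical strip on that window. On this event the driving Poisson measure can be coupled to a genuine L\'evy-type Poisson measure with intensity $\nu_\gamma(M_{t_0},du)$, to which the standard ubiquity machinery applies directly.
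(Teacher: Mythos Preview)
Your overall architecture is right, but the paper dissolves the ``process-dependent intensity'' obstacle you flag by a trick you have missed, and your proposed workaround would not work as stated.

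The key observation is that in the Poisson representation $\Delta M_{T_n}=(1+Z_n)^{-1/\gamma(M_{T_n-})}$ one has the identity $(\Delta M_{T_n})^{\gamma(M_{T_n-})}=(1+Z_n)^{-1}$, so the limsup sets $A_\delta=\limsup_n B\bigl(T_n,(\Delta M_{T_n})^{\delta\gamma(M_{T_n-})}\bigr)=\limsup_n B\bigl(T_n,(1+Z_n)^{-\delta}\bigr)$ and the approximation rate $\delta_t$ depend \emph{only} on the fixed Poisson system $\{(T_n,(1+Z_n)^{-1})\}$, not on $M$ at all. The pointwise exponent then factors cleanly as $h_M(t)=1/(\delta_t\gamma(M_t))$ (Theorem~\ref{deltat}), so $E_M(h)\cap I=\{t\in I:\delta_t=(h\gamma(M_t))^{-1}\}$ modulo $\cJ$. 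This is a set where the target for $\delta_t$ varies with $t$, and its dimension is computed via the \emph{localized} Jarn\'ik--Besicovitch theorem of \cite{BS3} (Theorem~\ref{main} here), which is designed precisely for such variable target functions; the classical ubiquity results you invoke treat only constant targets and would not suffice. With this decoupling there is no process-dependent intensity to contend with. The paper also reverses your order of implication: it establishes \eqref{mfs1.5} directly and deduces (i) from it (Remark~\ref{enough}).

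Your conditioning step, by contrast, is where the proposal breaks: the event ``$M$ stays in a narrow strip on $V$'' depends on the very jumps you wish to analyze, so after conditioning the point process is no longer Poisson and the standard ubiquity machinery cannot be invoked. Even the clean pathwise sandwich $X^{\gamma(M_s)}_t-X^{\gamma(M_s)}_s\le M_t-M_s\le X^{\gamma(M_{t-})}_t-X^{\gamma(M_{t-})}_s$ (which the paper does use, but only for the pointwise bound in Theorem~\ref{deltat}) does not trap the level set $\{s\in V:h_M(s)=h\}$ between level sets of any fixed L\'evy process, because $h_M(s)$ varies with $s$ through $\gamma(M_s)$ as well as through $\delta_s$. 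The decoupling identity above, together with the localized ubiquity theorem, is what makes the lower bound go through uniformly in $t_0$ almost surely.
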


\begin{figure}[h]
\label{fig1}
\includegraphics[width=7.8cm,height = 4cm]{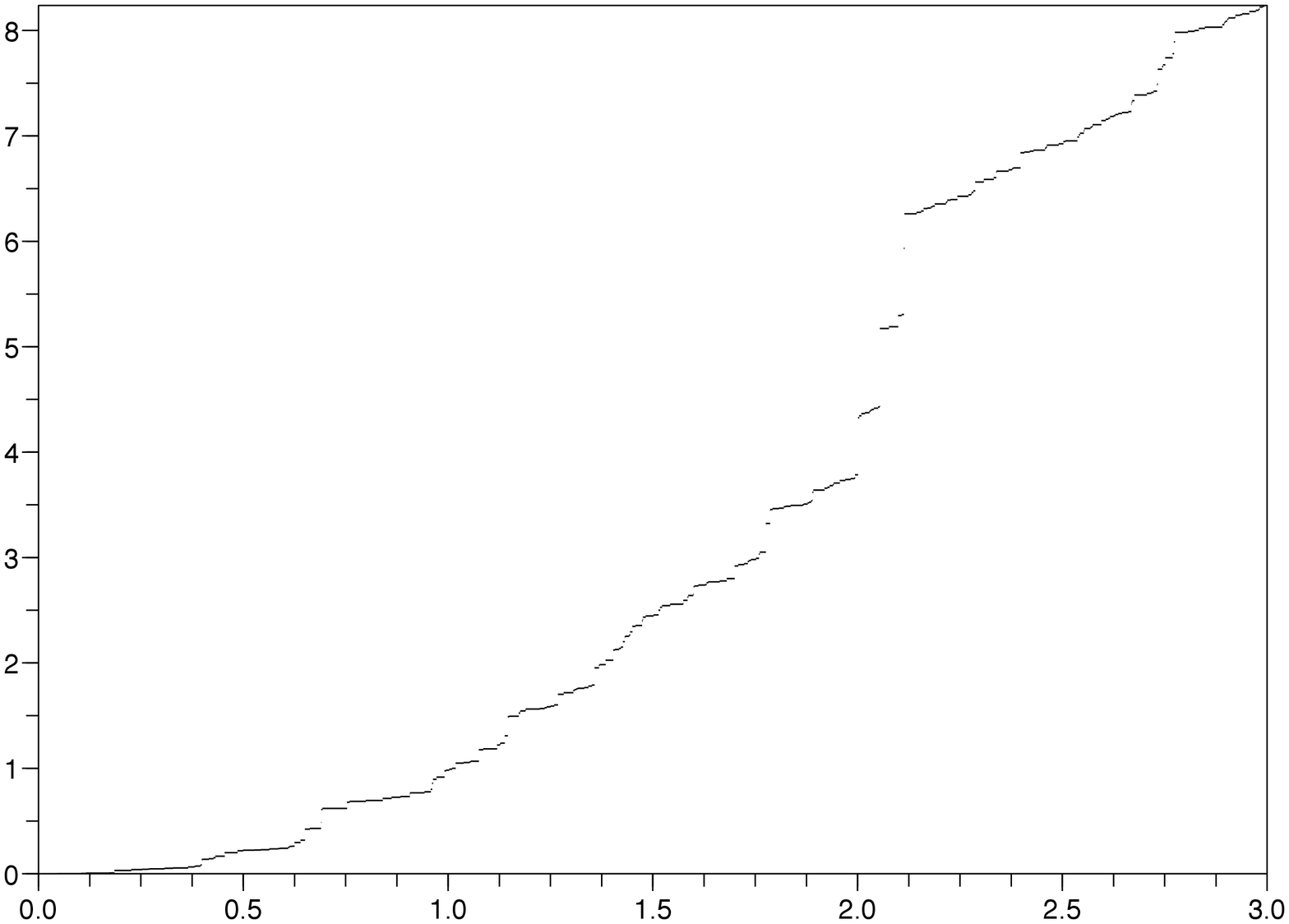}  
\includegraphics[width=7.8cm,height = 4cm]{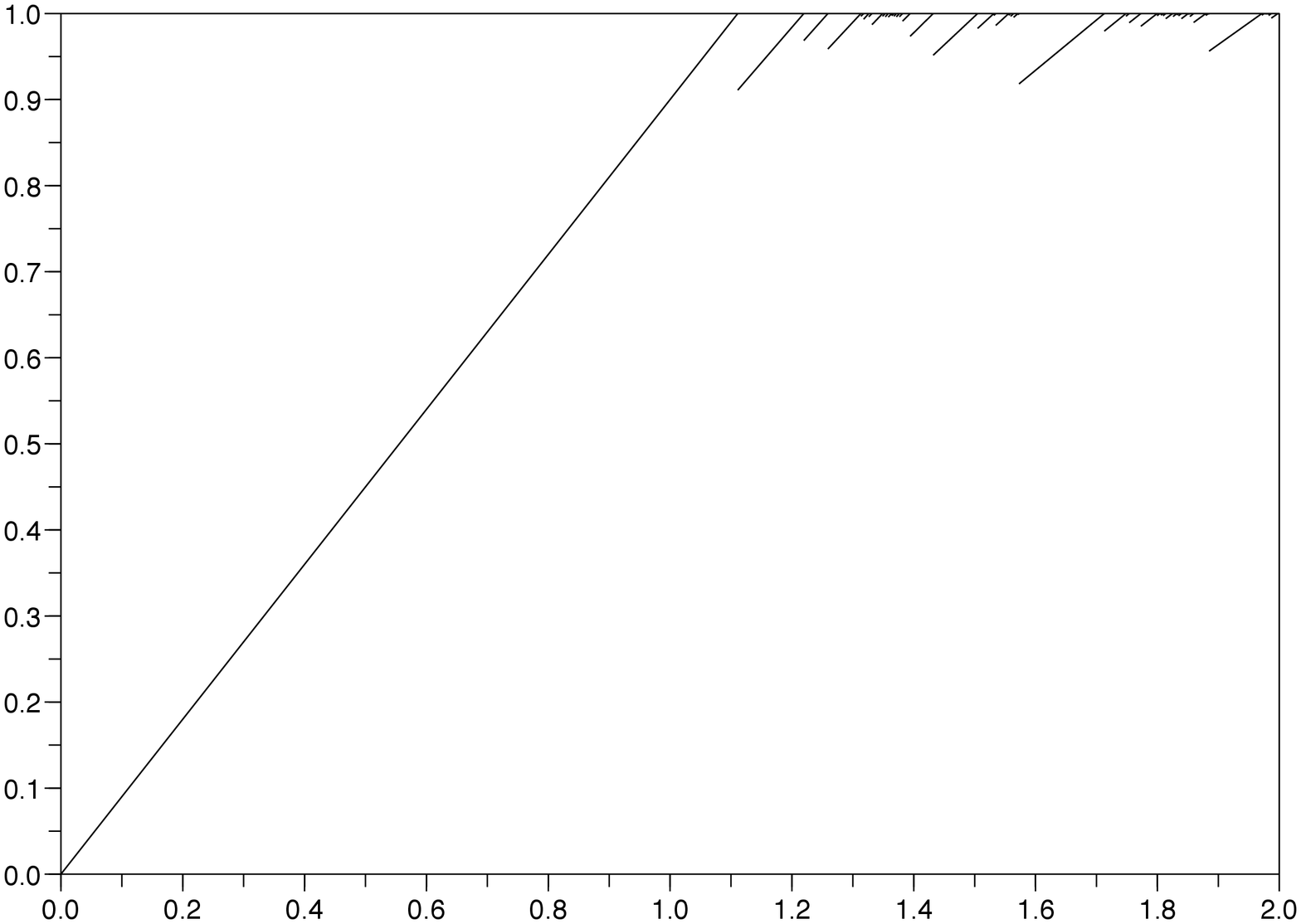}  
\includegraphics[width=7.8cm,height = 4cm]{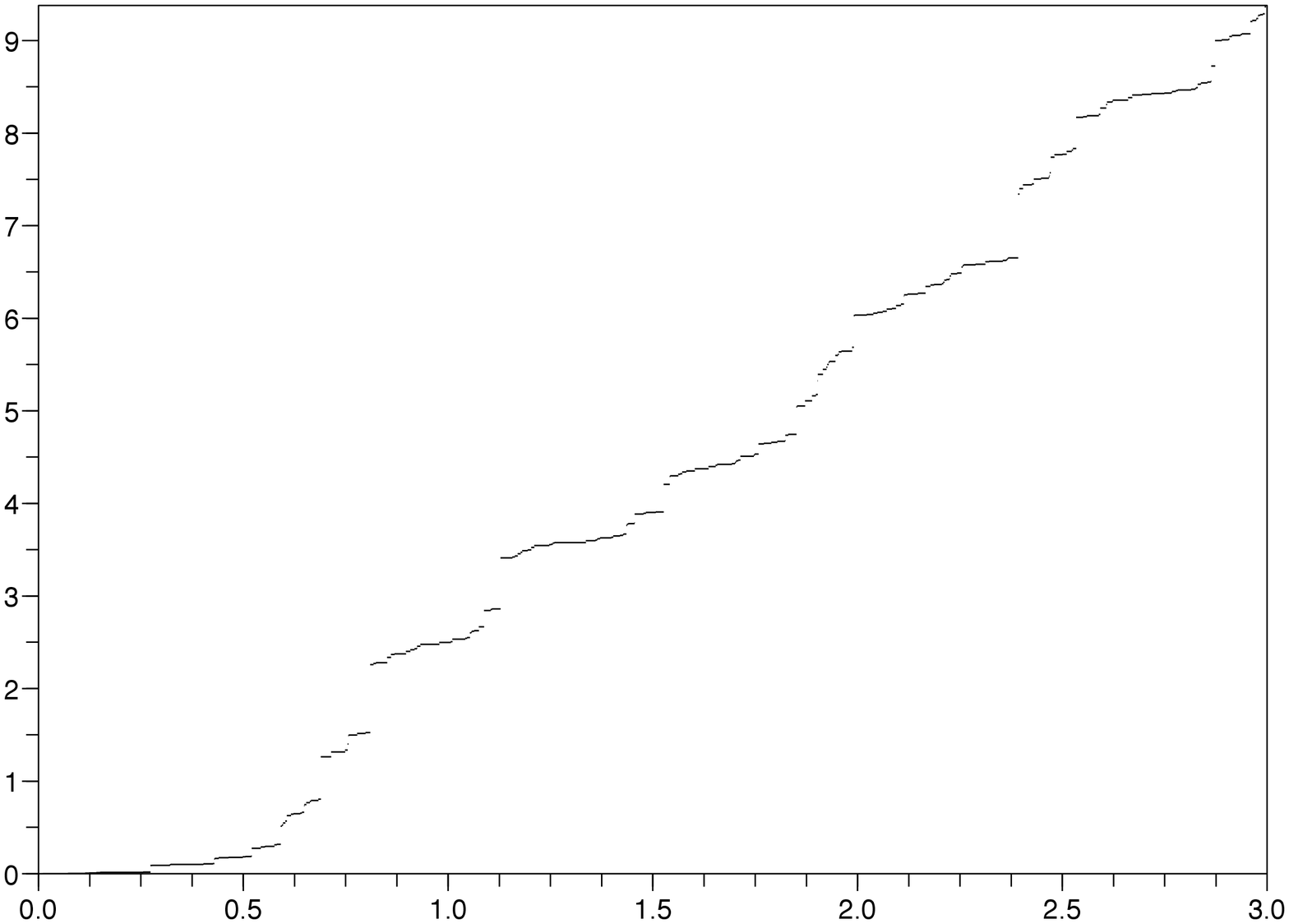}  
\includegraphics[width=7.8cm,height = 4cm]{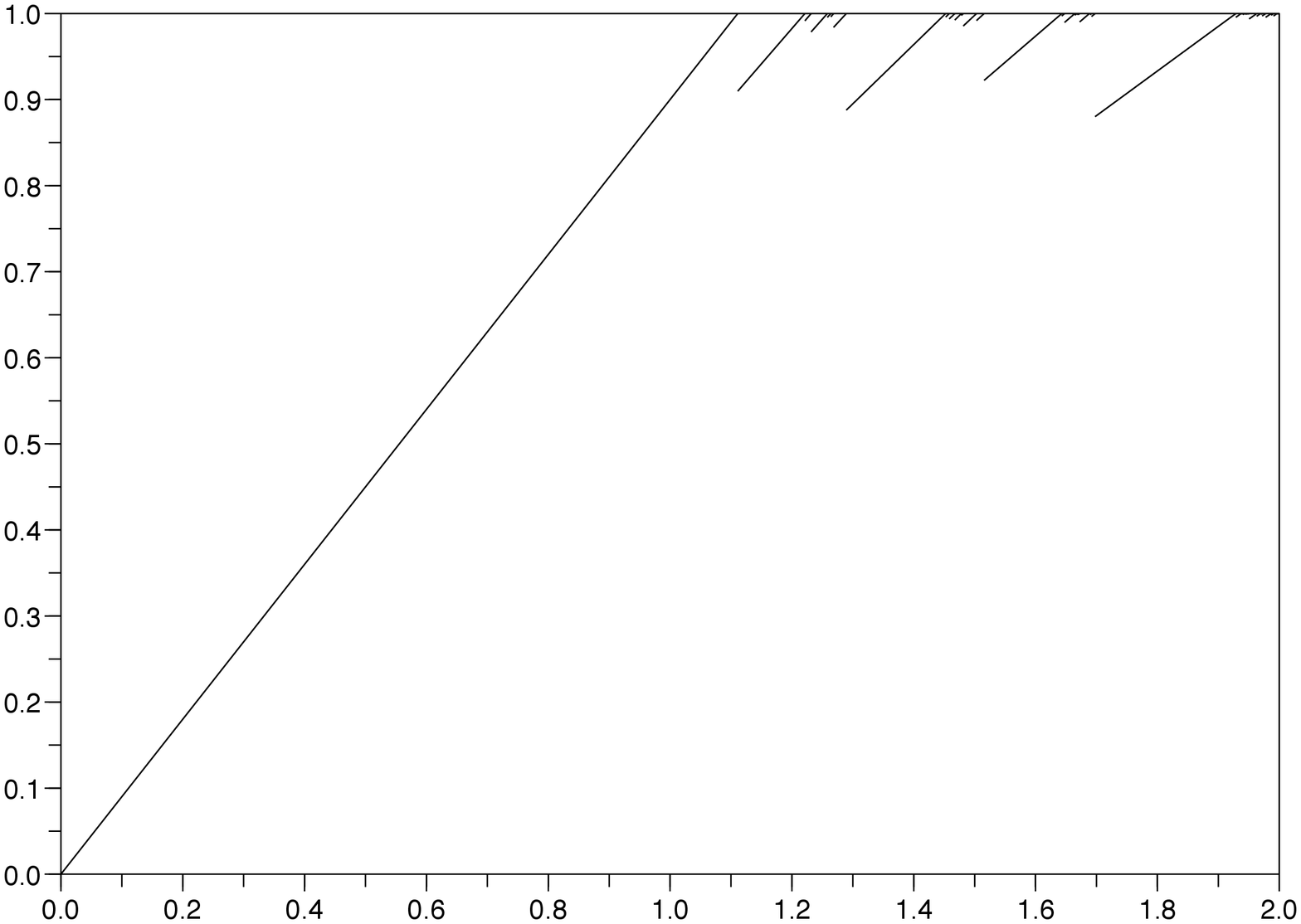} 
\caption{Two sample paths of the stochastic process $M$ built using the function $\gamma(y):=\min(1/2+y/4,0.9)$. On the right hand-side are plotted the 
theoretical spectra $D_M([0,3],.)$.}
\end{figure}

\begin{remark} \label{enough}
To prove Theorem \ref{mr1}, it is enough to show \eqref{mfs1.5}.
\end{remark}
\begin{proof}
The equality between \eqref{mfs1} and  \eqref{mfs1.5} follows from the 
fact that the $\cJ$ is dense in $\rr_+$, and that $t\mapsto \gamma(M_t)$
is c\`adl\`ag on $I$.

\sk
Next, point (i) follows from \eqref{mfs1} applied to 
$I_n=(t-1/n,t+1/n)$ by taking the limit $n\to \infty$
(recall Definition \ref{defpoint2}). Let us for example assume that 
$t\in \cJ$.

\sk
\noindent $\bullet$ If $h>1/\gamma(M_{t-})$, then for $n$ large enough, 
$h \cdot \gamma(M_s)\geq h  \cdot \gamma(M_{t-1/n})\geq 1$ for all $s\in I_n$, 
whence $D_M(I_n,h)=-\infty$ (we use here that $s\mapsto \gamma(M_{s-})$ 
is non-decreasing). Thus $D_M(t,h)=-\infty$. 

\sk
\noindent $\bullet$ If $h<1/\gamma(M_t)$, we get from (\ref{mfs1}) that 
$h \cdot \gamma(M_t)\leq D_M(I_n,h)\leq h \cdot  \sup_{[t-1/n,t+1/n]}\gamma(M_s)=
h \cdot \gamma(M_{t+1/n})$. Since $s\mapsto \gamma(M_s)$ is right continuous, 
$D_M(t,h)= \lim_n  D_M(I_n,h)= h \cdot \gamma(M_t)$.

\sk
\noindent $\bullet$ If $h \in [1/\gamma(M_{t}),1/\gamma(M_{t-})]$, then for all
$s\geq t$, $h \cdot \gamma(M_s)\geq 1$, while clearly $h \cdot \gamma(M_{t-1/n})<1$. 
Hence we deduce from (\ref{mfs1}) that
$h \cdot \gamma(M_{t-1/n})\leq D_M(I_n,h)\leq h \cdot  \sup_{[t-1/n,t)}\gamma(M_s)=
h \cdot \gamma(M_{t-})$. Finally, $D_M(t,h)= \lim_n  D_M(I_n,h)= h \cdot \gamma(M_{t-})$.
\end{proof}

\sk
Formula \eqref{mfs1.5} is better understood when plotted: 
for every $s\in I \cap \cJ$, plot a segment whose endpoints are $(0,0)$ and  
$(1/\gamma(M_{s-}),1)$ (open on the right), and take the supremum to get 
$D_M(I,.)$. Sample paths of the process $M$ and their associated spectra are given in Figure 1.


The formulae giving the local and global spectra are based on the computation of the pointwise \ho exponents at all times $t$. 
We will in particular prove (see 
Theorem \ref{deltat} and Proposition \ref{specbiz})
the following properties: a.s.,
\begin{eqnarray*}
\mbox{for   every $t\geq 0$, } \ &&h_M(t) \leq 1/\gamma(M_t),\\
\mbox{for Lebesgue-almost every $t$, }&& h_M(t)= 1/\gamma(M_t),\\
\mbox{for every $\kappa\in (0,1)$, }&& 
\dim_H \{t\geq 0: \; h_M(t)= \kappa/\gamma(M_t) \}=\kappa.
\end{eqnarray*}


It is worth emphasizing that, as expected from the construction of the process $M$, the local spectrum \eqref{localspecM} at any point $t>0$ essentially
coincides with that of a stable L\'evy subordinator of index $\gamma(M_t)$. This local comparison  will be   strengthened in Section~\ref{Tangent}, where we prove the existence of tangent processes for $M$ (which are L\'evy stable subordinators).

\sk

The proof of Theorem \ref{mr1}  requires  a so-called {\em ubiquity theorem}. Ubiquity theory deals with the search for lower bounds for the Hausdorff dimensions of limsup sets, and is classically required when performing the multifractal analysis of stochastic processes or (random or deterministic)  measures with jumps \cite{j,j2,BS1,BS2}. For our  Markov process $M$,  the ubiquity theorem needed here is the "localized ubiquity" theorem  recently proved in \cite{BS3}.
In order to apply this result,  we need to establish fine properties on   the distribution   of Poisson point processes (see Section \ref{properties}).

\begin{remark}
It follows from Theorem \ref{mr1} that
for all $s\in \cJ$, all $h\in (1/\gamma(M_s),1/\gamma(M_\sm)]$,  
$D_M(h) = h \cdot \gamma(M_\sm)  $. Thus the 
spectrum $D_M$ is a straight line on all segments of the form 
$(1/\gamma(M_s),1/\gamma(M_\sm)]$, $s\in \cJ$.
Nonetheless, observe that  the spectrum we obtain, 
when viewed as a map from $\R_+$ to $\R_+$,  is very irregular,
and certainly multifractal itself. 
This is in sharp contrast with  the spectra usually obtained, which are most of the time concave or (piecewise) real-analytic.
\end{remark}
\begin{remark}
Random processes with random singularity exponents of the most general form have already been constructed in \cite{AYAJAFFTAQ}, but there is no direct relationship with having a random singularity spectrum. An example of stochastic process $X$ (built using wavelet coefficients) with a random singularity spectrum is provided by \cite{durand}, but there  $D_X(h)$ is random for at most two values of $h$. 
\end{remark}


\begin{remark}
Of course we hope that Theorem \ref{mr1}, which concerns a specific and simple
process, might have extensions to more general Markov processes $M=(M_t)_{t\in[0,1]}$. In particular, this is certainly the case if we keep the same measures ${\nu_\gamma}$  and if we drop the monotonicity assumption on the  Lipschitz-continuous 
function $\gamma$.
\end{remark}

The organization of the paper is the following. In Section \ref{poisson},
Proposition \ref{exi} is proved. We also introduce a suitable coupling
of $M$ with a family of L\'evy processes.
In Section \ref{locreg},
we introduce a family of (random) subsets
of $[0,1]$ on which we control the regularity
of $M$. We conclude the proof of Theorem \ref{mr1} in Sections
\ref{low} and \ref{properties}.
Finally, in Section \ref{Tangent}, we show the existence of tangent processes 
for $M$.

\sk

{\bf In the whole paper, we assume that  $ ({ \mathcal H} )$ is satisfied.
We will restrict our study to the time interval $[0,1]$, 
which of course suffices.}

\section{Poisson representation of the process}\label{poisson}

First of all, we observe that using the substitution
$u=(1+z)^{-1/\gamma(y)}$ in (\ref{giok}) (for each fixed $y$),
we may rewrite, for any $y\in [0,\infty)$, for any
$\phi:[0,\infty) \mapsto \rr$ Lipschitz-continuous,
\begin{equation}\label{gifinal}
\cL \phi(y) = \int_0^\infty [\phi(y+G(\gamma(y),z))-\phi(y)] dz
\end{equation}
where
\begin{equation*}
G(\beta,z):=(1+z)^{-1/\beta}.
\end{equation*}

We recall the following representation of the Poisson measures
we are going to use.

\begin{remark}\label{spoisson}
Let $(Y_n)_{n\geq 1}$ be a sequence of independent exponential random variables with parameter $1$. Let $(T_n)_{n\geq 1}$ be a sequence of independent $[0,1]$-valued uniformly-distributed random variables, and assume that $(Y_n)_{n\geq 1}$ and $(T_n)_{n\geq 1}$ are independent. For each
$n\geq 1$, set $Z_n=Y_1+\cdots +Y_n$. Then the random measure
\begin{equation*}
N(dt,dz)=\sum_{n\geq 1} \delta_{(T_n,Z_n)}(dt,dz)
\end{equation*}
is a Poisson measure on $[0,1]\times [0,\infty)$ with intensity measure
$dtdz$. We denote by 
$\cF_t:=\sigma(\{N(A), A\in \cB([0,t] \times [0,\infty))\})$
the associated filtration.

The law of large numbers ensures us that a.s., $\lim_n Z_n/n=1$.
\end{remark}

This leads us to the following representation of the process $M$.

\begin{pro}\label{prosde}
Let $N$ be as in Remark \ref{spoisson}. 
Then there exists an unique c\`adl\`ag $(\cF_t)_{t\in [0,1]}$-adapted
process $M=(M_t)_{t\in [0,1]}$ solution to
\begin{equation}\label{sde}
M_t = \intot \int_0^\infty  G(\gamma(M_\sm), z) N(ds,dz).
\end{equation}
Furthermore, $M$ is a strong Markov process with generator $\cL$ 
(see (\ref{giok}) or (\ref{gifinal})), and is a.s. increasing.
Finally, $\cJ=\{t\in [0,1]: \; \Delta M_t \ne 0\}=\{T_n:\; n\geq 1\}$.
\end{pro}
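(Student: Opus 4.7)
The approach is classical Picard iteration for Poisson SDEs, combined with pathwise verification of the geometric properties. Two analytic estimates do all the work. First, the uniform bound $G(\gamma(y),z) \leq G(1-\ep,z) = (1+z)^{-1/(1-\ep)}$ is integrable on $[0,\infty)$ since $(\mathcal H)$ forces $1/(1-\ep) > 1$. Second, differentiating $G(\beta,z) = (1+z)^{-1/\beta}$ in $\beta$ and using $\gamma \in [\ep, 1-\ep]$ together with the Lipschitz continuity of $\gamma$ yields
\[ |G(\gamma(y_1),z) - G(\gamma(y_2),z)| \leq L(z)\,|y_1 - y_2|, \qquad L(z) = C\log(1+z)(1+z)^{-1/(1-\ep)}, \]
with $C = \|\gamma'\|_\infty/\ep^2$; this $L$ is also integrable on $[0,\infty)$.

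To obtain existence and uniqueness, I would set $M^0 \equiv 0$ and define iteratively $M^{k+1}_t := \int_0^t \!\int_0^\infty G(\gamma(M^k_{s-}),z)\,N(ds,dz)$. Each $M^k$ is a well-defined c\`adl\`ag adapted process, with $\E[\sup_{t \leq 1} M^k_t] \leq \int_0^\infty G(1-\ep,z)\,dz < \infty$ uniformly in $k$ by the first estimate and the Campbell formula. The second estimate, again combined with Campbell, gives
\[ \E\!\left[\sup_{s \leq t}|M^{k+1}_s - M^k_s|\right] \leq \|L\|_{L^1} \int_0^t \E[|M^k_s - M^{k-1}_s|]\,ds, \]
so iteration produces a summable geometric-in-$k$ bound, and $(M^k)$ converges a.s.\ uniformly on $[0,1]$ to a c\`adl\`ag adapted limit $M$; dominated convergence lets one pass to the limit in the SDE. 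Uniqueness follows by applying the same Lipschitz estimate to the difference of two solutions and invoking Gronwall.

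The remaining properties are essentially pathwise. Monotonicity is immediate because $G > 0$. Since the atoms of $N$ are exactly at $(T_n,Z_n)$, the integrand in \eqref{sde} vanishes off the $T_n$'s, so $M$ is constant on intervals avoiding every $T_n$, whereas $\Delta M_{T_n} = G(\gamma(M_{T_n-}),Z_n) > 0$; this identifies $\cJ = \{T_n : n \geq 1\}$. For the generator, piecewise constancy of $M$ between jumps gives, for any Lipschitz $\phi$, the telescoping identity
\[ \phi(M_t) - \phi(0) = \int_0^t\!\!\int_0^\infty [\phi(M_{s-} + G(\gamma(M_{s-}),z)) - \phi(M_{s-})]\,N(ds,dz); \]
taking expectation (Fubini is justified by the Lipschitz property of $\phi$ together with the first estimate) and recognising the change of variables \eqref{gifinal} produces $\E[\phi(M_t)] - \phi(0) = \E\int_0^t \cL\phi(M_s)\,ds$, i.e.\ $\cL$ is indeed the generator. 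Strong Markov then follows from strong uniqueness of \eqref{sde} and the independent-increments structure of the driving Poisson measure $N$.

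The only genuine analytic difficulty is verifying the integrability of $L(z)$: this is where the strict inequality $\gamma \leq 1-\ep < 1$ from $(\mathcal H)$ is crucial, as it delivers $1/(1-\ep) > 1$ strictly, which is enough to absorb the logarithmic factor. The density of $\{T_n\}$ in $[0,1]$ is not a real obstruction, because only finitely many $T_n$ have $Z_n \leq K$ for any fixed $K$, and the infinite collection of ``small'' jumps (large $Z_n$) sums to a finite quantity by the first estimate.
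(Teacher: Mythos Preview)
Your proof is correct and follows essentially the same route as the paper: Picard iteration and Gronwall in the norm $\E[\sup_{[0,1]}|\cdot|]$ for existence and uniqueness, then pathwise verification of monotonicity, the jump set, and the generator via the telescoping identity. The one cosmetic difference is in the Lipschitz estimate: you bound $|G(\gamma(y_1),z)-G(\gamma(y_2),z)|$ pointwise via $\partial_\beta G$ and then integrate $L(z)$, whereas the paper exploits monotonicity of $\beta\mapsto G(\beta,z)$ to compute $\int_0^\infty |G(\gamma(x),z)-G(\gamma(y),z)|\,dz = \big|\tfrac{\gamma(x)}{1-\gamma(x)}-\tfrac{\gamma(y)}{1-\gamma(y)}\big|$ exactly, avoiding the logarithmic factor altogether; both yield the same $L^1$-Lipschitz bound needed for the contraction.
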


Proposition \ref{exi} is a consequence of this result.

\sk

\begin{proof}
Owing to classical arguments (see e.g. Ikeda-Watanabe \cite{iw}),
the (pathwise) existence and uniqueness of the solution to (\ref{sde}),
follow from the two following properties,
which are easily checked under $(\cH)$:

\begin{itemize}
\item
for all $y\in [0,\infty)$, $z\in [0,\infty)$,
$G(\gamma(y),z) \leq (1+z)^{-1/(1-\e)} \in L^1([0,\infty),dz)$,

\item for all $x,y \in [0,\infty)$, $\int_0^\infty |G(\gamma(x),z)
-G(\gamma(y),z)| dz =|\frac{\gamma(x)}{1-\gamma(x)}-\frac{\gamma(y)}
{1-\gamma(y)} | 
\leq C |x-y|$ (here we use that $G(\beta,z)$ is nondecreasing as a function of
$\beta$).

\end{itemize}

Indeed, it suffices to use the Gronwall Lemma and a Picard
iteration (for the norm $||X-Y||=\mathbb{E}[\sup_{[0,1]}|X_t-Y_t|]$).
The strong Markov property follows from the pathwise uniqueness,
and the monotonicity of $M$ is obvious since $G$ is non-negative.
Finally, for $\phi:[0,\infty)\mapsto\rr$ Lipschitz-continuous, we have
\begin{eqnarray*} 
\phi(M_t)&=& \phi(0) + \sum_{s\leq t} [\phi(M_\sm+\Delta M_s) - \phi(M_\sm)]\\
&=& \phi(0) + \intot \int_0^\infty  [\phi(M_\sm+G(\gamma(M_\sm), z))-
\phi(M_\sm)] 
N(ds,dz).\nonumber
\end{eqnarray*}
Taking expectations and using (\ref{gifinal}), 
we get $\mathbb{E}[\phi(M_t)]=\phi(0)+\int_0^t \mathbb{E}[\cL\phi(M_s)]ds$,
so that the generator of $M$ is indeed $\cL$.
\end{proof}

We also introduce a one-parameter family of L\'evy processes, and
check some regularity comparisons with $M$.

\begin{pro}\label{comp}
Consider the Poisson measure $N$ and the process $M$ 
introduced in Remark \ref{spoisson} 
and Proposition \ref{prosde}. For each fixed 
$\alpha\in (0,1)$, we define
\begin{equation}\label{Levybeta}
X^\alpha_t = \intot \int_0^\infty G(\alpha,z)N(ds,dz).
\end{equation}
Then $(X^\alpha_t)_{t\in[0,1]}$ is a pure-jump $(\cF_t)_{t\in[0,1]}$-adapted 
L\'evy process. Its
L\'evy measure is ${\nu }^\alpha(du)=\alpha
u^{-1-\alpha} \indiq_{[0,1]}(u)du$, for which $\beta_{{\nu}^\alpha}=\alpha$.
Almost surely,

\begin{enumerate}
\item[(i)]
for all $0<\alpha<\alpha'<1$, for all $0\leq s \leq t \leq 1$,
\begin{equation*}
0\leq X^\alpha_t-X^\alpha_s \leq X^{\alpha'}_t-X^{\alpha'}_s;
\end{equation*}

\item[(ii)] for all $0\leq s \leq t\leq 1$, 
\begin{equation*}
0\leq X^{\gamma(M_s)}_t -X^{\gamma(M_s)}_s \leq 
M_t - M_s \leq  X^{\gamma(M_\tm)}_t -X^{\gamma(M_\tm)}_s.
\end{equation*}
\end{enumerate}
\end{pro}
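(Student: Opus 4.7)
\medskip

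\noindent\textbf{Proof proposal.}

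The plan is to handle the three statements separately, reducing everything to the pointwise monotonicity of the kernel $G(\alpha,z)=(1+z)^{-1/\alpha}$ in~$\alpha$, together with classical facts about Poisson integrals.

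\smallskip

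First, for the identification of $X^\alpha$ as a L\'evy process, I would use that $G(\alpha,\cdot):[0,\infty)\to(0,1]$ is strictly decreasing with $G(\alpha,0)=1$ and $G(\alpha,\infty)=0$, so the substitution $u=G(\alpha,z)$, i.e.\ $z=u^{-\alpha}-1$, $dz=-\alpha u^{-\alpha-1}du$, transports the intensity $ds\,dz$ to $ds\otimes \nu^\alpha(du)$ with $\nu^\alpha(du)=\alpha u^{-1-\alpha}\indiq_{[0,1]}(u)du$. Under $(\cH)$ (hence $\alpha<1$), $\int_0^1u\,\nu^\alpha(du)=\alpha/(1-\alpha)<\infty$, so the Poisson integral defining $X^\alpha$ converges absolutely and yields an $(\cF_t)$-adapted pure-jump process. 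Standard theory (Sato, or Ikeda--Watanabe \cite{iw}) then gives that $X^\alpha$ is a L\'evy process with L\'evy measure $\nu^\alpha$: independence and stationarity of increments follow from the fact that the integrand is deterministic and the Poisson measure $N$ is homogeneous in time. Computing $\int_0^1u^\beta\nu^\alpha(du)=\alpha\int_0^1u^{\beta-1-\alpha}du$, which is finite iff $\beta>\alpha$, identifies $\beta_{\nu^\alpha}=\alpha$.

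\smallskip

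Second, for part (i), the key elementary observation is that for each fixed $z\ge 0$ the map $\alpha\mapsto G(\alpha,z)=(1+z)^{-1/\alpha}$ is nondecreasing on $(0,1)$ (since $\alpha\mapsto-1/\alpha$ is increasing and $1+z\ge 1$). Hence for $0<\alpha<\alpha'<1$ and $0\le s\le t\le 1$, integrating the pointwise inequality $0\le G(\alpha,z)\le G(\alpha',z)$ against the non-negative Poisson measure $N$ restricted to $[s,t]\times[0,\infty)$ yields $0\le X^\alpha_t-X^\alpha_s\le X^{\alpha'}_t-X^{\alpha'}_s$.

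\smallskip

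Third, for part (ii), I would combine the same monotonicity of $G$ in $\alpha$ with the monotonicity properties of $M$. From Proposition~\ref{prosde}, $M$ is non-decreasing, and by $(\cH)$ so is $\gamma$; thus $u\mapsto\gamma(M_{u-})$ is non-decreasing on $[0,1]$. For every $u\in(s,t]$ this gives
\[
\gamma(M_s)\ \le\ \gamma(M_{u-})\ \le\ \gamma(M_{\tm}),
\]
and therefore, for every $z\ge 0$,
\[
0\ \le\ G(\gamma(M_s),z)\ \le\ G(\gamma(M_{u-}),z)\ \le\ G(\gamma(M_{\tm}),z).
\]
Integrating these pointwise inequalities against $N(du,dz)$ over $(s,t]\times[0,\infty)$, and recognising that for any $\cF_s$-measurable (resp.\ $\cF_t$-measurable) constant $\alpha$ one has $\int_s^t\int_0^\infty G(\alpha,z)N(du,dz)=X^\alpha_t-X^\alpha_s$ evaluated at that random $\alpha$, gives the sandwich
\[
X^{\gamma(M_s)}_t-X^{\gamma(M_s)}_s\ \le\ M_t-M_s\ \le\ X^{\gamma(M_{\tm})}_t-X^{\gamma(M_{\tm})}_s.
\]

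\smallskip

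The main (minor) subtlety will be the measurability detail in the last step, namely evaluating the family $\{X^\alpha_t-X^\alpha_s\}_{\alpha\in(0,1)}$ at the random values $\gamma(M_s)$ and $\gamma(M_{\tm})$. This is harmless because the Poisson integral is defined pathwise as an absolutely convergent sum over the atoms of $N$, and $\alpha\mapsto G(\alpha,z)$ is continuous, so the identification holds $\omega$ by $\omega$. No delicate estimate is required; the whole proof is essentially a monotone-comparison argument built on the elementary monotonicity of $G(\alpha,z)$ in $\alpha$.
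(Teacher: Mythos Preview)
Your proposal is correct and follows essentially the same approach as the paper: both arguments reduce everything to the pointwise monotonicity of $G(\alpha,z)=(1+z)^{-1/\alpha}$ in $\alpha$, combined with the monotonicity of $u\mapsto\gamma(M_{u-})$, and then integrate the resulting pointwise inequalities against the Poisson measure $N$. Your write-up is slightly more detailed (you spell out the substitution identifying $\nu^\alpha$, verify $\beta_{\nu^\alpha}=\alpha$, and flag the pathwise-evaluation issue for random $\alpha$), but the substance is identical to the paper's proof.
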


\begin{proof}
For each $\alpha\in (0,1)$, $X^\alpha$ is classically 
a L\'evy process, and its L\'evy measure is the image measure of $dz$
by $G(\alpha,.)$, which is nothing but $\nu^\alpha$.
Next, point (i) is not hard since for $0<\alpha<\alpha'$, we have 
$G(\alpha,z)<G(\alpha',z)$ for all $z\in (0,\infty)$.
Point (ii) is also immediate: since $u \mapsto \gamma(M_\um)$
is nondecreasing, and since $G(.,z)$ is nondecreasing for all 
$z\in(0,\infty)$,
we have a.s., for $0\leq s \leq t \leq 1$,
\begin{eqnarray*}
M_t-M_s&=&\int_{(s,t]} G(\gamma(M_\um),z) N(du,dz) \ala
&\leq&  
\int_{(s,t]} G(\gamma(M_{t-}),z) N(du,dz) = X^{\gamma(M_{\tm})}_t -
X^{\gamma(M_{\tm})}_s.
\end{eqnarray*}
Similarly, we obtain
$M_t-M_s \geq X^{\gamma(M_{s})}_t -
X^{\gamma(M_{s})}_s$, which ends the proof.
\end{proof}

\section{Local regularity}\label{locreg}

We consider a Poisson measure $N$ as in Remark \ref{spoisson}, and the associated
processes $M$, $X^\alpha$ as in Propositions \ref{prosde} and \ref{comp}. 
We start with a simple observation.

\begin{lemma} 
Almost surely, for all $\alpha\in (0,1)$,
\begin{equation}\label{ju}
\cJ = \{t\in[0,1]: \Delta M_t \ne 0 \} =  
\{t\in[0,1]  :  \Delta X^\alpha_t \ne 0 \} = \bigcup_{n\geq 1} \{T_n\},
\end{equation}
and for all $n\geq 1$, all $\alpha\in (0,1)$,
\begin{equation*}
(\Delta M_{T_n})^{\gamma(M_{T_n-})} =(\Delta X^\alpha_{T_n})^{\alpha}
=(1+Z_n)^{-1}.
\end{equation*}
\end{lemma}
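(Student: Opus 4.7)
The plan is to read off everything directly from the Poisson stochastic integral representations (\ref{sde}) and (\ref{Levybeta}). Since the Poisson measure $N$ has atoms exactly at the countable set $\{(T_n,Z_n):n\geq 1\}$, the S.D.E. for $M$ yields pathwise
\[
\Delta M_{T_n} = G(\gamma(M_{T_n-}),Z_n), \qquad \Delta M_t = 0 \text{ for } t \notin \{T_n:n\geq 1\},
\]
and similarly $\Delta X^\alpha_{T_n}=G(\alpha,Z_n)$ with $\Delta X^\alpha_t=0$ off $\{T_n:n\geq 1\}$.

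The first equality in \eqref{ju} is just the definition of $\cJ$. For the inclusion $\{T_n\}\subset \cJ$ (and the same for $X^\alpha$), I would use that $G(\beta,z)=(1+z)^{-1/\beta}>0$ for every $\beta\in(0,1)$ and every $z\in[0,\infty)$; hence every atom of $N$ produces a strictly positive jump, regardless of the (a.s.\ $[\ep,1-\ep]$-valued) random value $\gamma(M_{T_n-})$. The reverse inclusion is immediate from the two stochastic integrals, which cannot jump outside the support of $N$. This gives the set equalities in \eqref{ju}.

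For the second assertion, raising $\Delta M_{T_n}=(1+Z_n)^{-1/\gamma(M_{T_n-})}$ to the power $\gamma(M_{T_n-})$ and $\Delta X^\alpha_{T_n}=(1+Z_n)^{-1/\alpha}$ to the power $\alpha$ gives $(1+Z_n)^{-1}$ in both cases.

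The only mild point to watch is that the statement is quantified uniformly over all $\alpha\in(0,1)$, an uncountable family. But this causes no trouble: the set $\{T_n:n\geq 1\}$ and the values $(Z_n)_{n\geq 1}$ are read off from $N$ without reference to $\alpha$, and the pathwise identities $\Delta X^\alpha_{T_n}=(1+Z_n)^{-1/\alpha}$ and $\Delta M_{T_n}=(1+Z_n)^{-1/\gamma(M_{T_n-})}$ hold simultaneously for every $\alpha$ and $n$ on the single full-probability event on which the S.D.E.s of Propositions \ref{prosde} and \ref{comp} are solved. So there is in fact no real obstacle; the lemma is pure bookkeeping on the Poisson representation.
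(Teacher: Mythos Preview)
Your proof is correct and follows exactly the same approach as the paper, which simply reads off the jump sizes from the Poisson representations (\ref{sde}) and (\ref{Levybeta}). In fact your write-up is considerably more detailed than the paper's own two-line argument; the extra care about the uncountable family of $\alpha$'s is a nice touch that the paper leaves implicit.
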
 
\begin{proof}
First, (\ref{ju}) follows from
(\ref{sde}) and (\ref{Levybeta}). Next, for $n\geq 1$,
$\Delta M_{T_n}= G(\gamma(M_{T_n-}),Z_n)=(1+Z_n)^{-1/\gamma(M_{T_n-})}$
and $\Delta X^\alpha_{T_n}= G(\alpha,Z_n)=(1+Z_n)^{-1/\alpha}$.
\end{proof}

Next, we introduce some (random) sets of times, more or less 
well-approximated by the times of jumps of our process $M$.
The main idea is that at times well-approximated by the jump times of $M$,
the pointwise regularity of $M$ will be poor, while at times which are far from
the jump times of $M$,   $M$ will  have greater pointwise exponents.

We thus set,
for all $\delta \geq 1$,
\begin{eqnarray}\label{defad}
A_\delta &=& \bigcap_{p\geq 1} \  \bigcup_{n\geq p}  \ 
B(T_n, (\Delta M_{T_n})^{\delta \gamma(M_{T_n-})})
=\bigcap_{p\geq 1}  \ \bigcup_{n\geq p} \  
B(T_n, (\Delta X^\alpha_{T_n})^{\delta \alpha}) \\\label{adelta}
&=& \bigcap_{p\geq 1} \  \bigcup_{n\geq p}  \ B(T_n, (1+Z_n)^{-\delta}).
\end{eqnarray}
Here, $B(t,r)=(t-r,t+r)$.
Let us observe at once that
\begin{equation*}
\hbox{ for all }
\delta_1  \leq \de_2, \quad A_{\de_2} \subset A_{\delta_1}.
\end{equation*}




\begin{pro}\label{dhadelta}
Almost surely, $A_1 \supset [0,1]$. 
\end{pro}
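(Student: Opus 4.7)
I would begin by observing that, writing $U_p:=\bigcup_{n\ge p} B(T_n,(1+Z_n)^{-1})$, the set $A_1$ is the countable intersection $A_1=\bigcap_{p\ge 1}U_p$ of open random sets. Thus it is enough to establish that for each fixed $p\ge 1$ the open set $U_p$ contains $[0,1]$ almost surely; intersecting over the countably many values of $p$ then yields the claim.

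To prove such a covering statement, the plan is to condition on the sequence $(Z_n)_{n\ge 1}$, which is independent of $(T_n)_{n\ge 1}$. After this conditioning, the centres $(T_n)_{n\ge p}$ remain i.i.d.\ uniform on $[0,1]$, while the arc-lengths $\ell_n:=2(1+Z_n)^{-1}$ become deterministic and, since $Z_n$ is strictly increasing in $n$, form a decreasing sequence. The strong law of large numbers (recall Remark \ref{spoisson}) gives $Z_n/n\to 1$ almost surely, so $\ell_n\sim 2/n$. This puts us in the setting of Shepp's classical random covering theorem, applied on an interval; minor adjustments at the endpoints $\{0,1\}$ of $[0,1]$ are handled by a direct Borel--Cantelli argument on $t=0$ and $t=1$, since $\sum_n(1+Z_n)^{-1}=+\infty$ a.s. A short asymptotic computation yields $\sum_{k=0}^{n-1}\ell_{p+k}=2\log(1+n/p)+O(1)$ almost surely, so that $\exp(\sum_{k=0}^{n-1}\ell_{p+k})\asymp(1+n/p)^2$ and
\begin{equation*}
\sum_{n\ge 1}\frac{1}{n^2}\,\exp\!\Bigl(\sum_{k=0}^{n-1}\ell_{p+k}\Bigr) \;=\; +\infty \qquad \text{almost surely.}
\end{equation*}
Shepp's divergence criterion being satisfied, the theorem delivers $U_p\supset[0,1]$ almost surely for every fixed $p\ge 1$, which concludes the proof.

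The main obstacle is that the most obvious strategy---a pointwise Borel--Cantelli argument---is not sufficient. Applied to a fixed $t\in[0,1]$, independence of the $T_n$'s given $(Z_n)$ together with $\sum_n(1+Z_n)^{-1}=+\infty$ almost surely immediately yields $\mathbb{P}(t\in A_1)=1$ for every $t$, and thence by Fubini that $A_1$ has full Lebesgue measure; in fact $A_1$ is then a dense $G_\delta$-subset of $[0,1]$. Unfortunately, a dense $G_\delta$-set can have non-empty complement in $[0,1]$, so this falls well short of the pointwise inclusion required here. Upgrading ``Lebesgue-almost every $t$'' to ``every $t$'' is precisely the content of Shepp's covering theorem, whose proof rests on a delicate second-moment estimate on the positions of the random centres; this is the only non-trivial ingredient of the argument.
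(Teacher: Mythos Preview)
Your approach is correct and reaches the same conclusion, but the route differs from the paper's. Both proofs ultimately rest on Shepp's covering theorem, yet they invoke it in different formulations.

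The paper exploits the joint law of $\bigl(T_n,(1+Z_n)^{-1}\bigr)_{n\ge1}$ directly: this sequence is a Poisson point process on $[0,1]\times(0,1)$ with intensity $dt\,\mu(du)$, where $\mu(du)=u^{-2}\,du$. The Poisson-measure version of Shepp's criterion (as recorded in Bertoin and in Jaffard) then reduces the whole statement to verifying
\[
\int_0^1 \exp\!\Bigl(2\int_t^1 \mu((u,1))\,du\Bigr)\,dt=\infty,
\]
which is a one-line computation since $\mu((u,1))=u^{-1}-1$. No conditioning, no asymptotics for $Z_n$, and no separate treatment of the endpoints are needed.

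You instead condition on $(Z_n)_{n\ge1}$, reducing to the classical deterministic-length version of Shepp's theorem on the circle, and then check the series criterion via $\ell_n\sim 2/n$. This is a legitimate alternative: it trades the recognition of the Poisson structure for a conditioning step plus an asymptotic estimate and an endpoint patch. One small point of care: the law of large numbers alone yields $\sum_{k\le n}\ell_k\sim 2\log n$, which is weaker than your claimed $2\log n+O(1)$; the $O(1)$ would require a rate for $Z_k-k$ (e.g.\ the LIL). Fortunately the weaker statement already suffices, since $\sum_{k\le n}\ell_k\ge(2-\varepsilon)\log n$ for large $n$ gives $n^{-2}\exp\bigl(\sum_{k\le n}\ell_k\bigr)\ge n^{-\varepsilon}$, and the series still diverges. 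The paper's argument is shorter and avoids these details; yours has the merit of decoupling the two independent sources of randomness.
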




\begin{proof}  
Observe that $\sum_{n\geq 1} \delta_{(T_n,(1+Z_n)^{-1})}$ 
is a Poisson measure on $[0,1]\times (0,1)$ with intensity measure
$dt \mu(du)$ where $\mu(du)=du/u^2$ (because $\mu(du)$ is the image measure 
of $\indiq_{\{z>0\}}dz$ by the application $(1+z)^{-1}$).
Due to Shepp's Theorem \cite{shepp} (we use here the version used in the papers of
 Bertoin \cite{b} and Jaffard \cite[Lemma 3]{j}), it suffices to prove that
\begin{equation*}
S=\int_0^1 \exp\left(2\int_t^1 \mu((u,1))du \right) dt = \infty.
\end{equation*}
But $\mu((u,1))=u^{-1}- 1$, so that $S=\int_0^1 e^{2(t-1-\log t)}dt
\geq e^{-2} \int_0^1 dt/t^2 = \infty$.
\end{proof}

In order to characterize the pointwise exponent of $M$ at every time $t$, we need to introduce the notion of approximation rate by a Poisson process.

\begin{defi} \label{defdeltat}
Recall that a.s., $\delta \mapsto A_\delta$ is non-increasing and $A_1=[0,1]$.
We introduce, for any $t\in [0,1]$, 
the (random) index of approximation of $t$
\begin{equation}\label{eqdeltat}
\delta_t := \sup\{ \delta \geq 1: \; t \in A_\delta\}\geq 1.
\end{equation}
\end{defi}

We now are able to give the value of $h_M(t)$.

\begin{theo}\label{deltat} 
Almost surely,  for all $t\in [0,1]\backslash \cJ$, 
\begin{equation}
\label{egalite}
\displaystyle h_M(t)= \frac{1}{\delta_t\cdot \gamma(M_t)}.
\end{equation}

In particular, this implies that  for every   $t\in [0,1]$, $h_M(t) \leq  1/\gamma(M_t) $.
\end{theo}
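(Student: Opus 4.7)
My plan is to sandwich $M$ between the L\'evy subordinators $X^\alpha$ supplied by Proposition \ref{comp} and to transfer the pointwise H\"older identification available for each $X^\alpha$ to $M$. The key observation is that the set $A_\delta$ in \eqref{adelta} depends only on the Poisson data $(T_n,Z_n)$ and not on $\alpha$, so the index $\delta_t$ is the same random quantity for $M$ and for every $X^\alpha$; only the base exponent needs to be adjusted when passing between processes.

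For the upper bound, I fix $t\in[0,1]\setminus\cJ$ and $\delta\in[1,\delta_t)$. The definition of $A_\delta$ yields infinitely many indices $n$ with $T_n\to t$ and $|T_n-t|\leq (1+Z_n)^{-\delta}$. If $M\in C^\alpha(t)$ with polynomial $P_t$, applying $|M(s)-P_t(s)|\leq C|s-t|^\alpha$ at $s=T_n$ and at its left limit and combining via the triangle inequality yields
\[
(1+Z_n)^{-1/\gamma(M_{T_n-})}=\Delta M_{T_n}\leq 2C|T_n-t|^\alpha\leq 2C(1+Z_n)^{-\delta\alpha}
\]
for all $n$ large. Since $t\notin\cJ$, the c\`adl\`ag monotone nature of $M$ forces $M_{T_n-}\to M_t$, whence $\gamma(M_{T_n-})\to\gamma(M_t)$ by continuity of $\gamma$; taking logarithms and letting $n\to\infty$ gives $\alpha\leq 1/(\delta\gamma(M_t))$, and then $\delta\uparrow\delta_t$ together with $\alpha\uparrow h_M(t)$ produce $h_M(t)\leq 1/(\delta_t\gamma(M_t))$.

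For the lower bound, I rely on Proposition \ref{comp}. Fix $\alpha'\in(\gamma(M_t),1)$; since $t\notin\cJ$, the convergence $\gamma(M_{s-})\to\gamma(M_t)$ as $s\to t$ furnishes $\eta>0$ with $\gamma(M_{s-})\leq\alpha'$ on $(t-\eta,t+\eta)$. Combining (ii) and (i) of Proposition \ref{comp} then gives, for $s$ in this neighborhood,
\[
|M_s-M_t|\leq |X^{\alpha'}_s-X^{\alpha'}_t|,
\]
so $h_M(t)\geq h_{X^{\alpha'}}(t)$. Since $X^{\alpha'}$ jumps at the same $T_n$ with magnitudes $(1+Z_n)^{-1/\alpha'}$, the set $A_\delta$ is exactly the natural approximation set attached to $X^{\alpha'}$, and Jaffard's pointwise analysis in \cite{j} yields $h_{X^{\alpha'}}(t)=1/(\alpha'\delta_t)$. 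Letting $\alpha'\downarrow\gamma(M_t)$ supplies $h_M(t)\geq 1/(\gamma(M_t)\delta_t)$ and completes \eqref{egalite}. The bound $h_M(t)\leq 1/\gamma(M_t)$ for every $t\in[0,1]$ is then immediate: for $t\in\cJ$ the jump forces $h_M(t)=0$, and for $t\notin\cJ$ it follows from \eqref{egalite} together with $\delta_t\geq 1$, which is guaranteed by Proposition \ref{dhadelta}.

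The main obstacle is the pointwise identification $h_{X^{\alpha'}}(t)=1/(\alpha'\delta_t)$ for the stable subordinator, since Theorem \ref{thjaff} records only the multifractal spectrum. This pointwise formula is the engine of the proof in \cite{j}: its upper half mirrors the jump-size argument used above for $M$, while its lower half---showing that the contribution of the small jumps of $X^{\alpha'}$ away from $t$ does not depress the exponent---is the delicate ingredient and ultimately rests on Shepp-type covering estimates of the kind underlying Proposition \ref{dhadelta}.
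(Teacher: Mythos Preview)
Your proof is correct and follows essentially the same route as the paper: the upper bound via the jump-size estimate at the approximating times $T_n$ is exactly the content of the paper's Lemma~\ref{lemold} (which you prove inline), and the lower bound via the sandwich $|M_s-M_t|\le |X^{\alpha'}_s-X^{\alpha'}_t|$ from Proposition~\ref{comp} and Jaffard's pointwise formula for the subordinator is precisely the paper's argument. The only point worth making explicit is that the increment comparison yields $\tilde h_M(t)\ge \tilde h_{X^{\alpha'}}(t)$ rather than $h_M(t)\ge h_{X^{\alpha'}}(t)$ directly; the paper handles this by noting that for the subordinator $\tilde h_{X^{\alpha'}}(t)=h_{X^{\alpha'}}(t)=(\alpha'\delta_t)^{-1}$, which is exactly the pointwise identification you invoke from \cite{j}.
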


We introduce, for $f:\rr_+\mapsto \rr$ a locally bounded function and
$t_0 \in \rr_+$, 
$$
\tilde h_f(t_0):= \sup\{\alpha>0: \exists C, \; |f(t)-f(t_0)|
\leq C |t-t_0|^\alpha\; \hbox{ in a neighborhood of }t_0\}.
$$
This notion of H\"older exponent of $f$ at $t_0$ is slightly
different of that introduced in Definition \ref{defpoint} (which may involve a polynomial).
Note that we always have $\tilde h_f(t_0)\leq h_f(t_0)$.
In the case where $f$ is purely discontinuous and increasing,
one might expect that $h_f(t_0)=\tilde h_f(t_0)$ in many cases.
This is   the case when $f$ is a L\'evy
subordinator without drift. Indeed,   
 from \cite[Lemma 4 and Proposition 2]{j}, we have a.s.
$$\mbox{for all $t\in [0,1]\backslash \cJ$,  \  for all $\alpha\in(0,1)$, \ 
$\tilde h_{X^\alpha}(t)= h_{X^\alpha}(t)=(\delta_t\cdot \alpha)^{-1}$ }$$
(here we use that $X^\alpha$ is a pure jump L\'evy process without drift
with L\'evy measure $\nu^\alpha$ satisfying $\beta_{\nu^\alpha}=\alpha$).

\medskip

\begin{preuve} {\it of Theorem \ref{deltat}: lowerbound.}
Let $t \in  [0,1]\backslash \cJ$ and $\e>0$ small enough be fixed.
By construction,  $M$ is continuous at $t$. 
Since $\gamma$ is also continuous,  there exists $\eta>0$ such that for all 
$s \in (t-\eta,t+\eta)$, $\gamma(M_s) \in (\gamma(M_t)-\e,\gamma(M_t)+\e)$.
We deduce from Proposition \ref{comp}-(ii) that for all
$s\in (t-\eta,t)$,
$$0 \leq M_t-M_s \leq X_t^{\alpha_\e^+} 
-X_s^{\alpha_\e^+},
$$
where $\alpha_\e^+:=\gamma(M_t)+\e$. Similarly, when $s\in (t,t+\eta)$,
$$0\leq M_s-M_t \leq X_s^{\alpha_\e^+} 
-X_t^{\alpha_\e^+}.
$$
Thus applying the definition of $\tilde h$, we conclude that
$h_M(t)\geq \tilde h_M(t)\geq \tilde h_{X^{\alpha_\e^+}}(t)=
(\delta_t.\alpha_\e^+)^{-1}$. Letting $\e$ go to zero, we deduce that
$h_M(t) \geq (\delta_t.\gamma(M_t))^{-1}$.
\end{preuve}

\sk

To prove an upper bound for $h_M(t)$, we use 
 the following result  of  \cite[Lemma 1]{JAFFOLD}.

\begin{lemma} 
\label{lemold}
Let $f:\rr\mapsto \rr$ 
be a function discontinuous on a dense set of points, and let 
$t\in \R$. Let $(t_n)_{n\geq 1}$  be a real sequence converging to $t$ such 
that at each $t_n$, $f$ has right and left limits at $t_n$ and
$|f(t_n+)-f(t_n-)|=s_n$. Then
$$
h_f(t) \leq \liminf_{n\to +\infty} \frac{\log s_n}{\log |t_n-t|}.
$$
\end{lemma}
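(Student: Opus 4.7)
The plan is to unwind Definition~\ref{defpoint} directly. Fix any $\alpha \in (0, h_f(t))$, so that $f \in C^\alpha(t)$: there exist $C > 0$, a polynomial $P$ of degree strictly less than $\alpha$, and a neighborhood $V$ of $t$ such that $|f(u) - P(u)| \le C|u-t|^\alpha$ for every $u \in V$. The strategy is to use this pointwise control at points straddling each $t_n$ to bound the jump size $s_n$ by $|t_n - t|^\alpha$ up to a multiplicative constant, and then take logarithms.

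For $n$ large enough, $t_n \in V$. Using that $f$ has one-sided limits at $t_n$, choose, for each fixed $n$, sequences $(u_n^{\pm,k})_{k\geq 1} \subset V$ with $u_n^{-,k} \nearrow t_n$ and $u_n^{+,k} \searrow t_n$ as $k \to \infty$, so that $f(u_n^{\mp,k}) \to f(t_n\mp)$. Writing $f(u_n^{+,k}) - f(u_n^{-,k}) = [f(u_n^{+,k}) - P(u_n^{+,k})] + [P(u_n^{+,k}) - P(u_n^{-,k})] + [P(u_n^{-,k}) - f(u_n^{-,k})]$ and applying the triangle inequality gives
\[
|f(u_n^{+,k}) - f(u_n^{-,k})| \le C|u_n^{+,k} - t|^\alpha + |P(u_n^{+,k}) - P(u_n^{-,k})| + C|u_n^{-,k} - t|^\alpha.
\]
Letting $k \to \infty$ and using the continuity of $P$ at $t_n$ to kill the middle term yields $s_n \le 2C|t_n - t|^\alpha$.

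Taking logarithms and dividing by $\log|t_n - t|<0$ (valid for $n$ large) reverses the inequality to
\[
\frac{\log s_n}{\log|t_n - t|} \ge \alpha + \frac{\log(2C)}{\log|t_n - t|}.
\]
The correction tends to $0$ as $n\to\infty$, so $\liminf_n \log s_n / \log|t_n - t| \ge \alpha$; letting $\alpha \nearrow h_f(t)$ concludes the proof. The only mildly delicate point, and the step I would expect a reader to check most carefully, is handling the polynomial $P$, which for $\alpha \geq 1$ need not be constant; but its continuity at $t_n$ is enough to make $P(u_n^{+,k}) - P(u_n^{-,k})$ vanish as $u_n^{\pm,k} \to t_n$, so this is not a real obstacle. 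The hypothesis that $f$ is discontinuous on a dense set plays no role in the estimate itself and is only there to prevent the conclusion from being vacuous when all $s_n$ equal $0$.
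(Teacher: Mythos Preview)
Your argument is correct. Note, however, that the paper does not actually prove Lemma~\ref{lemold}: it quotes the result from \cite[Lemma 1]{JAFFOLD} and uses it as a black box. So there is no in-paper proof to compare against; your write-up supplies what the paper delegates to a reference.

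The approach you take---approaching each $t_n$ from both sides, using the $C^\alpha$ estimate at the approximating points, and letting the continuity of the Taylor polynomial $P$ absorb the cross term---is the standard route and is essentially how the cited lemma is established. Two small comments: the notation ``$f(u_n^{\mp,k}) \to f(t_n\mp)$'' should presumably read $f(u_n^{\pm,k}) \to f(t_n\pm)$; and your observation that the dense-discontinuity hypothesis is not used in the inequality is correct (it is contextual rather than essential to the bound).
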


\begin{preuve} {\it of Theorem \ref{deltat}: upperbound.}
By \eqref{defad} and \eqref{eqdeltat}, for every $\ep>0$,  
$t\in A_{\delta_t-\ep}$, so that
there exists a sequence of jump instants $(T_{n_k})_{k\geq 1}$ converging to $t$ 
such that $|t-T_\nk|\leq (\Delta M_{T_\nk})^{(\de_t-\ep)\gamma(M_{T_\nk-})}$. 
Hence, by Lemma \ref{lemold}, we get
\begin{eqnarray*} h_M(t) & \leq &  \liminf_{k\to +\infty} \frac{\log \Delta M_{T_\nk}}{\log |T_\nk-t|} \leq  \liminf_{k\to +\infty} \frac{\log \Delta M_{T_\nk}}{\log (\Delta M_{T_\nk})^{(\de_t-\ep)\gamma(M_{T_\nk-})}} \\
& \leq & \liminf_{k\to +\infty} \frac{1}{ {(\de_t-\ep)\gamma(M_{T_\nk-})}} = \frac{1}{ {(\de_t-\ep)\gamma(M_{t})}}.
\end{eqnarray*}
The last point comes from the fact that $M$ is continuous at $t$. Letting $\ep$ tend to zero yields that $h_M(t) \leq (\delta_t.\gamma(M_t))^{-1}$.

To finish the proof of Theorem \ref{deltat}, it is clear 
from \eqref{egalite} and (\ref{eqdeltat}) that
for all $t\in [0,1]\backslash \cJ$, $h_M(t) \leq 1/\gamma(M_t)$.  Finally,  $h_M(t)=0$ for all $t\in \cJ$, since $M$ jumps at $t$.
\end{preuve}


\section{Computation of the spectrum: a localized ubiquity theorem}\label{low}

We are now in a position to prove Theorem \ref{mr1}.
Recall that the Poisson measure $N=\sum_{n}\delta_{(T_n,Z_n)}$ has been introduced
in Remark \ref{spoisson}, that the process $M$ has been built in Proposition
\ref{prosde}, and that $\delta_t$ has been introduced in Definition
\ref{defdeltat}.

We will use here the 
{\em localized Jarnik-Besicovitch theorem}  of \cite{BS3}, that we explain now.
We introduced in \eqref{defad} and \eqref{eqdeltat} the approximation rate of any real number $t\in [0,1]$  by our Poisson point process. In fact, such an approximation rate can be defined for any system of points.

\begin{defi}
(i) A {\em system of points} $\mathcal{S}=\{(t_n,l_n)\}_{n\geq 1}$ is a 
$\zu\times (0,\infty)$-valued sequence such that $l_n$ decreases to $0$
when $n$ tends to infinity.

\noindent (ii)  $\mathcal{S}=\{(t_n,l_n)\}_{n\geq 1}$ is said to satisfy
the covering property if 
\begin{equation}\label{cp}
\bigcap_{p\geq 1}  \ \bigcup_{n\geq p} \  B(t_n,l_n)
\supset [0,1].
\end{equation}

\noindent (ii)
For $t\in \zu$, the approximation rate of $t$ by $\mathcal{S}$ is defined as
\begin{equation}
\label{defdelta2}
\delta_t= \sup\{\delta\geq 1: t \mbox{ belongs to an infinite number of balls  } B(t_n,l_n^\delta)\}.
\end{equation}
\end{defi}

Set  $\lambda_n:=(1+Z_n)^{-1}$. Then
$\mathcal{P}= \{(T_n,\lambda_n)\}_{n\geq 1}$ is a system of points. 
Of course formula \eqref{defdelta2} coincides with formula 
\eqref{eqdeltat} when the system of points is $\mathcal{P}$. 
This system $\mathcal{P}$ is a Poisson point process with intensity measure
\begin{equation}
\label{defpoiss}
\Lambda(s,\lambda)={\bf 1}_{\zu\times (0,1)}
(s,\lambda)ds \frac{d\lambda}{\lambda^2}.
\end{equation}

Let us state the result of \cite[Theorem 1.3]{BS3}. 
The definitions of a {\em weakly redundant system} and the 
{\em condition  {\bf (C)}} are recalled in next section. 
There, the  Poisson system $\mathcal{P}$ is shown to enjoy these properties 
almost surely.

\begin{theo}
\label{main}
Consider a weakly redundant system $\cS$ satisfying the covering property
(\ref{cp}) and condition {\bf (C)}.
Let  $I=(a,b)\subset\zu$ and $f:I \to [1,+\infty)$ be   continuous
at every $t\in I\backslash \cZ$, for some countable
$\cZ \subset [0,1]$.
Consider
\begin{equation*}
S(I,f)  =   \left\{t\in I: \ \delta_t \geq f(t) \right\}
\;\;\hbox{ and }\;\;
\widetilde S(I,f)  =  \left\{t\in I: \ \delta_t  =  f(t) \right\}.
\end{equation*}
Then
\begin{equation*}
\dim_H S(I,f)= \dim_H\widetilde S(I,f)  =  \sup \{1/f(t): t\in I\backslash 
\cZ\}.
\end{equation*}
\end{theo}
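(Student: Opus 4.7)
The plan is to split the proof into a (standard) upper bound on $\dim_H S(I,f)$ and the more delicate matching lower bound on $\dim_H \widetilde S(I,f)$; since $\widetilde S(I,f)\subset S(I,f)$, the two equalities asserted then follow together.

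For the upper bound I would exploit that $S(I,f)$ is essentially a limsup set. Fix $t_0\in I\backslash \cZ$ and $\eta>0$. By continuity of $f$ at $t_0$, there is an open neighborhood $J\subset I$ of $t_0$ on which $f\geq f(t_0)-\eta$. Any $t\in S(I,f)\cap J$ then belongs to $B(t_n,l_n^{f(t_0)-\eta})$ for infinitely many $n$, so for every $p\geq 1$ the family $\{B(t_n,l_n^{f(t_0)-\eta})\}_{n\geq p}$ covers $S(I,f)\cap J$ by balls of arbitrarily small diameter and total $s$-mass controlled by $\sum_{n\geq p} l_n^{s(f(t_0)-\eta)}$. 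Condition $(\mathbf{C})$, which encodes the correct asymptotic count of balls of the system at each dyadic scale, makes this sum finite as soon as $s>1/(f(t_0)-\eta)$, so $\dim_H (S(I,f)\cap J)\leq 1/(f(t_0)-\eta)$. Covering $I\setminus \cZ$ by countably many such $J$'s, letting $\eta\downarrow 0$, and using that $\cZ$ is countable (hence Hausdorff-negligible) yields $\dim_H S(I,f)\leq \sup\{1/f(t):t\in I\backslash \cZ\}$.

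For the lower bound, fix $t_0\in I\backslash \cZ$ and a small $\eta>0$; I would build a generalized Cantor set $K\subset \widetilde S(I,f)$ with $\dim_H K\geq 1/f(t_0)-\eta$, and then take the supremum over a countable dense subset of $I\backslash \cZ$. By continuity of $f$ at $t_0$, choose an open interval $J\ni t_0$ on which $|f-f(t_0)|<\eta/2$, and set $\alpha=f(t_0)$. The construction is recursive: at generation $k$, inside each previously selected ball I would choose a family of contracted balls $B(t_n,l_n^{\alpha})$ with $l_n$ lying in a rapidly shrinking dyadic window $[2^{-j_{k+1}},2^{-j_k})$, pairwise disjoint, well separated, and spread across the parent ball. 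The weakly redundant structure of $\cS$ allows $\{B(t_n,l_n):l_n\sim 2^{-j}\}$ to be partitioned into $O(j)$ subfamilies of pairwise disjoint balls, so restricting to one such subfamily costs only a polynomial-in-$j$ factor; condition $(\mathbf{C})$ then guarantees that each parent ball contains a number of admissible contracted children essentially of the form $|\mathrm{parent}|\cdot 2^{j(1-\eta)}$. Spreading mass uniformly on $K$ and applying the mass distribution principle with $(j_k)$ growing fast enough yields $\dim_H K\geq 1/\alpha-\eta$.

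To ensure $K\subset \widetilde S(I,f)$ (and not merely $K\subset S(I,f)$), I would add a trimming step at each generation, removing from the pool any candidate ball itself too well approximated by some $B(t_m,l_m)$ with $l_m$ much smaller than $l_n^{\alpha}$. A Borel--Cantelli-type counting argument based again on condition $(\mathbf{C})$ shows that only a negligible fraction of candidates is discarded, so the dimension bound survives; a countable intersection over rational trimming parameters then forces $\delta_t\leq f(t_0)+\eta$ for every surviving $t\in K$, which combined with the construction's built-in $\delta_t\geq \alpha=f(t_0)$ and the continuity of $f$ on $J$ gives $\delta_t=f(t)$ up to $O(\eta)$. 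The main obstacle is precisely this Cantor construction: one must simultaneously (a) pick balls of the correct type from $\cS$, (b) enforce disjointness via weak redundancy, (c) secure sufficient density via condition $(\mathbf{C})$, and (d) cap the approximation rate from above to land in $\widetilde S(I,f)$, all while keeping the mass distribution bookkeeping tractable. The remaining steps (countable unions, letting $\eta\downarrow 0$, and passing from $\dim_H K$ to $\dim_H \widetilde S(I,f)$) are then routine.
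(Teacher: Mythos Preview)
The paper does not prove Theorem~\ref{main}: it is quoted from \cite[Theorem 1.3]{BS3} (Barral--Seuret, ``A localized Jarnik--Besicovitch theorem''), and the present paper only checks that the Poisson system $\mathcal{P}$ satisfies the hypotheses (weak redundancy and condition~\textbf{(C)}) needed to invoke it. There is therefore no in-paper proof to compare against.

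That said, your sketch has the right overall architecture but misassigns the roles of the two hypotheses. In the upper bound, the finiteness of $\sum_n l_n^{s(f(t_0)-\eta)}$ for $s(f(t_0)-\eta)>1$ follows from \emph{weak redundancy}, which gives $\#\mathcal{T}_j\le N_j 2^j=2^{j(1+o(1))}$; condition~\textbf{(C)} plays no part there. Conversely, condition~\textbf{(C)} is not a generic density statement: look at Definitions~\ref{def2} and~\ref{def4}. The property $\mathcal{P}(V,\delta)$ asserts the existence of a point $t_n\in V$ whose contracted ball $B(t_n,l_n^\delta)$ contains \emph{no other} system point $t_p$ across the full range of scales between $\psi(j,\varphi)$ and roughly $\delta j$. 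This isolation is exactly the mechanism that caps $\delta_t$ from above for points retained in the Cantor set --- it is built into the selection of children, not obtained by a separate Borel--Cantelli trimming as you propose. So condition~\textbf{(C)} does double duty in the lower bound (supplying enough children \emph{and} forcing $\delta_t\le\delta$), and none in the upper bound; your steps (c) and (d) are really a single step driven by~\textbf{(C)}.
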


Observe that $\cP$ satisfies the covering property due to Proposition
\ref{dhadelta}.
We assume for a while that a.s., 
the Poisson system $\mathcal{P}$ is weakly redundant and fulfills {\bf (C)}.

\begin{pro}\label{specbiz} 
Consider the process $M$ built in Proposition \ref{prosde}.
Almost surely, 
\begin{enumerate}
\item[(i)] for all $I=(a,b) \subset [0,1]$, all $\kappa \in (0,1)$,
$$
\dim_H \{ t\in I: \; h_M(t)=\kappa/\gamma(M_t)\}  = \dim_H \{ t\in I: \; h_M(t)\leq \kappa/\gamma(M_t)\}= \kappa;
$$
\item[(ii)]
for Lebesgue-almost every $t\in [0,1]$, $h_M(t)=1/\gamma(M_t)$.
\end{enumerate}
\end{pro}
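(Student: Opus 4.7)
The plan is to translate the statements about $h_M(t)$ into statements about the approximation index $\delta_t$ via Theorem \ref{deltat}, and then apply the localized Jarnik--Besicovitch Theorem \ref{main} to the Poisson system $\mathcal{P}$ (granting, as to be established in the next section, that almost surely $\mathcal{P}$ is weakly redundant and satisfies condition $\mathbf{(C)}$). Recall that by Theorem \ref{deltat}, a.s., for every $t\in [0,1]\setminus \cJ$ one has $h_M(t)=1/(\delta_t\gamma(M_t))$ with $\delta_t\geq 1$, while $h_M(t)=0$ for $t\in\cJ$.

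For part (i), I would fix $I=(a,b)\subset [0,1]$ and $\kappa\in(0,1)$. For $t\in I\setminus \cJ$, the conditions $h_M(t)=\kappa/\gamma(M_t)$ and $h_M(t)\leq \kappa/\gamma(M_t)$ are equivalent to $\delta_t=1/\kappa$ and $\delta_t\geq 1/\kappa$ respectively, while any $t\in I\cap \cJ$ trivially belongs to the ``$\leq$'' set (since $h_M(t)=0$) but not to the ``$=$'' set (since $\gamma(M_t)<\infty$). Taking the constant function $f\equiv 1/\kappa\in [1,\infty)$, Theorem \ref{main} yields $\dim_H S(I,f)=\dim_H \widetilde S(I,f)=\kappa$. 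Consequently
$\{t\in I:h_M(t)=\kappa/\gamma(M_t)\}=\widetilde S(I,f)\setminus \cJ$ and $\{t\in I:h_M(t)\leq \kappa/\gamma(M_t)\}=(S(I,f)\setminus \cJ)\cup(I\cap\cJ)$. Since $\cJ$ is countable, and since $\widetilde S(I,f)$ is uncountable (having positive Hausdorff dimension $\kappa$), removing or adding a piece of $\cJ$ leaves the Hausdorff dimension unchanged, giving the claim.

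For part (ii), I would show that $E=\{t\in (0,1):\delta_t>1\}$ is Lebesgue-negligible. Writing $E=\bigcup_{n\geq 1}\{t\in(0,1):\delta_t\geq 1+1/n\}$ and applying Theorem \ref{main} with the constant function $f\equiv 1+1/n$ yields
$$\dim_H \{t\in(0,1):\delta_t\geq 1+1/n\}=\frac{n}{n+1}<1,$$
so each set in the union has vanishing one-dimensional Hausdorff measure, hence vanishing Lebesgue measure; by countable subadditivity, so does $E$. Since $\delta_t\geq 1$ always and $\cJ$ is countable, Lebesgue-almost every $t\in[0,1]$ lies outside $E\cup \cJ$, and there Theorem \ref{deltat} gives $h_M(t)=1/\gamma(M_t)$.

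The main obstacle is not in this proposition itself: once the ubiquity machinery of Theorem \ref{main} is at hand, the derivation reduces to a bookkeeping translation between $h_M$ and $\delta_t$, combined with the fact that $\cJ$ is countable. The genuine difficulty---which is deferred to the next section---is verifying the hypotheses of Theorem \ref{main} for the random system $\mathcal{P}$, namely that almost surely $\mathcal{P}$ is weakly redundant and satisfies condition $\mathbf{(C)}$.
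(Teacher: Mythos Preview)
Your proof is correct and follows essentially the same approach as the paper: both translate the conditions on $h_M(t)$ into conditions on $\delta_t$ via Theorem \ref{deltat}, apply Theorem \ref{main} with a constant function $f$, and use countability of $\cJ$ to discard jump times. The only cosmetic difference is in part (ii), where the paper invokes the already-proved part (i) for the sets $S_n=\{t\in I: h_M(t)\leq (1-1/n)/\gamma(M_t)\}$, while you instead apply Theorem \ref{main} directly to $\{t:\delta_t\geq 1+1/n\}$; these are the same argument up to the change of variables $\kappa\leftrightarrow 1/\delta$.
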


\begin{proof} By Theorem \ref{deltat} and since $\cJ$ is countable,
we observe that for $I=(a,b)\subset[0,1]$,
$$
\dim_H \{ t\in I: \; h_M(t)=\kappa/\gamma(M_t)\}  = 
\dim_H \{ t\in I: \; \delta_t=1/\kappa\}.
$$

Let $\kappa \in (0,1)$.
Since the Poisson system $\cP=\{(T_n,\lambda_n)\}_{n\geq 1}$ 
satisfies all the required conditions, we may apply Theorem
\ref{main} with the constant function $f\equiv 1/\kappa$
and get $\dim_H \{ t\in I: \; h_M(t)=\kappa/\gamma(M_t)\}=\kappa$.
The same arguments hold for 
$ \dim_H \{ t\in I: \; h_M(t)\leq \kappa/\gamma(M_t)\}$, which concludes the
proof of (i).
Next, we write, for $I=(a,b)\subset[0,1]$,
\begin{equation}
\label{decomp}
I = \Big \{t\in I: h_M(t)=1/\gamma(M(t))\Big\} \ \bigcup 
\left( \cup_{n\geq 1} S_n \right),
\end{equation}
where $S_n:= \Big\{t\in I: h_M(t)\leq (1-1/n)/\gamma(M(t)) \Big\}$. 
By (i), for every $n\geq 1$, the Lebesgue measure of the set $S_n$ is zero since it is of Hausdorff dimension strictly less than $1$. 
We deduce from \eqref{decomp} that for Lebesgue-a.e. $t\in I$,
$h_M(t)=1/\gamma(M(t))$. Since this holds for any $I=(a,b)\subset [0,1]$,
the conclusion follows.
\end{proof}

\begin{preuve} {\it of Theorem \ref{mr1}.} By Remark \ref{enough},
it suffices to prove \eqref{mfs1.5}.
Let $I=(a,b)\subset[0,1]$. By Theorem \ref{deltat}, for all $h\geq 0$,
$$
D_M(I,h)=\dim_H \{t\in (a,b): \; h_M(t)=h\}=\dim_H \{t\in (a,b): \; \delta_t=
(h\cdot \gamma(M_t))^{-1} \}.
$$
The jump times   $\cJ$ are countable and of exponents zero for $M$, so they  do not interfere in the computation of Hausdorff dimensions.

\smallskip

For $s\in\cJ\cap (a,b)$ a jump time of $M$ and $h\in (0,1/\gamma(M_\sm))$,
consider the function $f_s$ defined on the interval $I_s= (a,s)$ by
$f_s(t)=(h . \gamma(M_t))^{-1}$. This function is continuous on 
$I_s\backslash\cJ$, and satisfies, for every $t\in I_s$, 
$f_s(t) \geq  (h \cdot \gamma(M_{\sm}))^{-1}\geq 1.$
Applying Theorem \ref{main} to  the Poisson system $\cP=\{(T_n, \lambda_n)\}_{n\geq 1}$ 
(which satisfies all the required conditions), we obtain
$$ 
\dim_H  \{t\in I_s: \ \delta_t  =  (h  \cdot  \gamma(M_t))^{-1} \}  =  
\sup \{h \cdot \gamma(M_t): t\in I_s\backslash \cJ\}  = h\cdot \gamma(M_\sm),
$$
since $\gamma(M_t)$ increases to $\gamma(M_\sm)$ as $t$ increases to $s$.
Hence, for every $s\in \cJ \cap (a,b)$, for every $h$ such that $ 0 <h \leq 1/\gamma(M_{\sm})$, we have
$$  
\dim_H  \Big\{E_M(h) \cap I_s \Big\}  =   h\cdot \gamma(M_\sm).
$$
Furthermore, for  $s\in \cJ \cap (a,b)$, when $h \geq 1/\gamma(M_{\sm})$, 
we have
$E_M(h) \cap [s,b]=\emptyset$. Indeed, by Theorem \ref{deltat},
for $t\geq s$, $h_M(t)\leq 1/\gamma(M_t)\leq  1/\gamma(M_s)<1/\gamma(M_\sm)$.

Let now $h\geq 0$ be fixed. Then using the density of $\cJ$,
\begin{eqnarray*}
\displaystyle
E_M(h)\cap I &=& \left(\bigcup_{s\in \cJ\cap (a,b), \gamma(M_\sm)<1/h} (E_M(h)
\cap (a,s))
\right) \\
&& \bigcup 
\left(\bigcup_{s\in \cJ\cap (a,b), \gamma(M_\sm)\geq 1/h} (E_M(h)\cap [s,b))
\right)
\end{eqnarray*}
As noted previously, the second term of the right hand side is empty.
Thus we get, since $\cJ$ is countable,
\begin{eqnarray*}
D_M(I,h) &=& \sup \{ \dim_H  (E_M(h)\cap I_s) : \; s\in \cJ \cap I,
\; \gamma(M_\sm)<1/h\}\\
&=& \sup \{h\cdot \gamma(M_\sm) : s\in \cJ\cap I 
\mbox { and } h\cdot \gamma(M_\sm) <1\},
\end{eqnarray*}
which was our aim. Observe that if $h\geq 1/\gamma(M_a)$, this 
formula gives $D_M(I,h)=-\infty$.
\end{preuve}

\section{Study of the distribution of the Poisson point process}
\label{properties}

To conclude the proof, we only have to check that $\mathcal{P}$ is  
a weakly redundant system satisfying {\bf (C)}. Recall that
$\mathcal{P}=\{(T_n,\lambda_n)\}_{n\geq 1}$
is a Poisson point process with intensity measure \eqref{defpoiss}.

\subsection{Weak redundancy and condition {\bf (C)}}

The weak redundancy property asserts that the balls $B(t_n,l_n)$ naturally associated with a system of points $\mathcal{S}$ do not overlap excessively. The precise definition is the following.

\begin{defi}
\label{defweak} Let $\cS=\big \{(t_n,l_n)\big \}_{n\ge 1}$ be a 
system of points. For any integer $j\ge 0$ we set
\begin{equation*}
\mathcal{T}_{j} =\big \{n: \ \ 2^{-(j+1)}<l_n\le
2^{-j}\big \}.
\end{equation*}
\noindent

Then $\cS$ is said to be {\em weakly redundant} if $t_n\neq t_{n'}$ for all
$n\neq n'$ and if there exists  a non-decreasing sequence of positive
integers $(N_{j})_{j\ge 0}$ such that

\noindent
(i) $\lim_{j\to\infty}({\log_2 N_{j}})/{j}=0$.

\noindent
(ii) for every $j\ge 1$, $\mathcal{T}_j$ can be decomposed into
$N_{j}$ pairwise disjoint subsets (denoted
$\mathcal{T}_{j,1},\dots,\mathcal{T}_{j,N_{j}} $) such that for each
$1\le i\le N_{j} $, the balls $B(t_n,l_n)$, $ n\in \mathcal{T}_{j,i}$, are pairwise disjoint.
\end{defi}
In other words, for every  $x\in \zu$, $x$ cannot belong to more than $N_j$ balls $B(t_n,l_n)$ with $2^{-j-1}<l_n\leq 2^j$. 

\sk

As shown in \cite{BS4}, Proposition 6.2, $\cP$ is weakly redundant.

\sk

A weak redundant system  do not necessarily  satisfy the conclusion
of Theorem \ref{main}. This is the reason why condition {\bf{(C)}}, which imposes finer properties  on the distribution of the system $\mathcal{S}$,  has to be introduced.

\smallskip

We denote by $\Phi$ the set of   functions $\varphi:\mathbb{R}_+ \to \R_+$  
such that
\begin{itemize}
\item
$\varphi$ is increasing, continuous and $\varphi(0)=0$,
\item
$r^{-\varphi(r)}$ is decreasing and tends to $\infty$ as $r$ tends to 0, 
\item
$r^{\alpha-\beta \varphi(r)}$ is increasing near 0 for all $\alpha,\beta>0$. 
\end{itemize}
For example, the function $\varphi(r)= \frac{\log|{\log(r)}|}{|\log r|}$, 
defined for $r\geq 0$ close enough to 0, has the required behavior around 0.


\begin{defi}
\label{def3}
Suppose that  a system of points  $\cS=\{(t_n,l_n)\}_{n\geq 1}$ is weakly 
redundant and adopt the notation of Definition \ref{defweak}.
For every  $\varphi\in\Phi$  and for any $j\ge 1$, we define
\begin{eqnarray*}
\psi( j,\ph) & = & \max \left\{m\in\mathbb{N}:   N_m\cdot 2^{m}\le 2^{j(1-\varphi(2^{-j}))}\right\}.\end{eqnarray*}
\end{defi}
Of course, the   sequence of integers   $(  N_j)_{j\ge 1}$ is the one appearing in the  Definition \ref{def3} of the weak redundancy.

Obviously $\psi( j,\ph)\leq j$, for every $\ph$, $(N_j)$ and $j$.

\sk

For any dyadic interval $U=[k2^{-j}, (k+1)2^{-j})$, we set $g(U)=j$, i.e. $g(U) $ is  the dyadic generation of $U$. We denote by $\cG_j$ the set of all dyadic
intervals of generation $j$. Finally, we denote by $\cG_*:=\bigcup_{j\geq 0}
\cG_j$ the set of all dyadic intervals.

\begin{defi}\label{def2}
Suppose that  a system of points  $\cS=\{(t_n,l_n)\}_{n\geq 1}$ is weakly 
redundant and adopt the notation of Definitions \ref{defweak} and \ref{def3}.
Let $\varphi \in \Phi$. For every dyadic interval $V\in \cG_j$ and every $\delta>1$,   the property $\mathcal{P}(V,\delta)$ is satisfied when  there exists an integer $n\in \mathcal{T}_{j}$ such that $t_n\in V$ and 
$$
B\left(t_n,(l_n)^{\delta} \right) \  \bigcap  \ \left\{   t_p : 
\begin{array}{l}  \ \  p\in  \mathcal{T}_k, \ \ \mbox{ where $k$ satisfies  }  \ \\ \ \  \psi( j,\ph) \, \le \, k \, \le\,  - \log_2   (l_n)^{\delta}  +4 
\end{array} \right\} =\{t_n\}.
$$
\end{defi}

Let us try to give an   intuition of the meaning of  $\mathcal{P}(V,\delta )$ for a dyadic interval $V$.
$\mathcal{P}(V,\delta )$  holds true when, except $t_n$,   the family of points $(t_p)_{p\geq 1}$ "avoids" the ball $B\left(t_n,(l_n)^{\delta } \right) $  when $p$ describes  all the sets $\mathcal{T}_k$, for $k$ ranging between $g(V)$ and $ - \log_2   (l_n)^{\delta }  +4$, i.e. between the dyadic generations of $
B\left(t_n,l_n  \right) $ and $B\left(t_n,(l_n)^{\delta } \right) $.

\sk

This condition seems to be reasonable, maybe not for all   dyadic intervals, but at least for a large number among them.  {Condition    {\bf (C)}} is meant to ensure the validity of $\mathcal{P}(V,\delta )$ for a sufficient set of intervals $V$ and approximation degrees $\delta$.

\smallskip


\begin{defi}
\label{def4}
Suppose that  a system of points  $\cS=\{(t_n,l_n)\}_{n\geq 1}$ is weakly 
redundant and adopt the notation of Definitions \ref{defweak} and \ref{def3}.
The system $\cS$ is said to satisfy condition {\bf (C)} if there exist:
\begin{itemize}
\item
a function $\varphi\in\Phi$, 
\item
a continuous function $\kappa:(1,+\infty) \to (0,1]$,
\item
a dense subset $\Delta$ of $(1,\infty)$,
\end{itemize}
such that
for every $\delta\in \Delta$,  for every dyadic  interval $U \in \cG_*$,
there are infinitely many integers $j\ge g(U)$ satisfying
\begin{equation*}
\#\mathcal{Q}(U,j,\delta) \, \ge \, \kappa(\delta )  \cdot  2^{d(j -g(U))} ,
\end{equation*}
where
$$
\mathcal{Q}(U,j,\delta)=\left\{V\in \mathcal{G}_j:\ V\subset U \ \mbox{ and } \ \ \mathcal{P}\big (V,\delta \big )\text{ holds}\right \}.
$$
\end{defi}

\subsection{Proof of {\bf (C)} for the Poisson process $\cP$.}
We only need to find  a function $\varphi\in\Phi$ and a continuous function $\kappa:(1,+\infty) \to \R_+^*$ such that for every $\delta>1$,  with probability 1, for every $U\in\mathcal{G}_*$, there are infinitely many integers $j\ge g(U)$ satisfying  $\#\mathcal{Q}(U,j,\delta)\ge \kappa(\de) \cdot 2^{j-g(U)}$. Then, for any countable and dense subset $\Delta$ of $(1,\infty)$, with probability 1, for every $\delta\in\Delta$, for every $U\in\mathcal{G}_*$, there are infinitely many integers $j\ge g(U)$ satisfying  $\#\mathcal{Q}(U,j,\delta)\ge \kappa(\de) \cdot 2^{j-g(U)}$.

In fact, any $\varphi\in\Phi$ is suitable. 
\smallskip

Let $\varphi \in \Phi$ and $\delta>1$. For $U\in\mathcal{G}_*$ and $V \subset U$ such that $V\in  \bigcup_{j >  g(U)}\mathcal{G}_j$, let us introduce the event
$$
\mathcal{A}(U,V,\delta)= \left .\begin{cases}   \exists\ n\in \mathcal{T}_{g(V)} \mbox{ such that } \ T_n\in V \mbox{ and }\\    
B(T_n,(\lambda_n)^{\delta }) \bigcap  \ \left(\bigcup_{\psi({g(V)},\ph) \le k \le h(V)} \mathcal{T}_j\right) =\{T_n\}
\end{cases} \!\!\!\!\!\right \}
$$
where $h(V)= \big [\delta (g(V)+1)  \big ]+4$.  Recall that $n\in \mathcal{T}_{g(V)} $ means that $2^{-g(V)-1} < \lambda_n \leq 2^{-g(V)}$.
By construction, we have the inclusion $\mathcal{A}(U,V,\delta) \subset \{\mathcal{P}(V,\delta )\text{ holds}\}$.

For every $j\ge 1$, let $\widetilde {\mathcal{G}}_j=\big\{[2k\cdot 2^{-j},(2k+1)\cdot 2^{-j}]: 0\le k\le 2^{j}-1\big\}$.  The restrictions of the Poisson point process to the strips $V\times (0,1)$, where $V$ describes $\widetilde {\mathcal{G}}_j$, are independent. Consequently, the events $\mathcal{A}(U,V,\delta)$, when $V \in \widetilde {\mathcal{G}}_j$  and $V\subset U$, are independent (we must separate the intervals in $\widetilde {\mathcal{G}}_j$   because if $V\in \mathcal{G}_j$, $T_n\in V$ and $\lambda_n\le 2^{-j}$, then $B(T_n,(\lambda_n)^{\delta })$ may overlap  with the neighbors of $V$).  

We denote by $X(U,V,\delta)$ the random variable  $\mathbf{1}_{\mathcal{A}(U,V,\delta)}$. For a given generation $j>g(U)$, the random variables $(X(U,V,\delta))_{V\in \widetilde{\mathcal{G}}_j}$ are i.i.d Bernoulli variables, whose common  parameter is denoted by $p_{j}(\de)$. The following Lemma holds.
\begin{lemma} \label{bbb}
There exists  a continuous function $\kappa_1:(1,+\infty) \to (0,1)$   such that for every $j\geq 1$, $p_j(\de)\ge\kappa_1(\de)$.
\end{lemma}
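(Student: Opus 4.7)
The plan is to exploit the explicit form of the Poisson intensity $\Lambda(dt,d\lambda) = \mathbf{1}_{[0,1]\times(0,1)}\,dt\,d\lambda/\lambda^2$ together with the independence of $\cP$ on disjoint Borel sets. By translation invariance of $\Lambda$ in the $t$-variable, it suffices to exhibit a continuous positive function $\kappa_1$ of $\delta\in(1,\infty)$ such that $P(\mathcal{A}(U,V,\delta))\ge \kappa_1(\delta)$ uniformly in $j=g(V)\ge 1$ and $V\in\widetilde{\mathcal{G}}_j$.

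First I would introduce the ``target box'' $B_L:=V_L\times(2^{-j-1},2^{-j}]$, where $V_L$ denotes the left half of $V$, and the associated ``forbidden box''
\[
B^{\mathrm{forb}}(t,\lambda):=B(t,\lambda^{\delta})\times(2^{-h(V)-1},2^{-\psi(j,\ph)}].
\]
I would then consider the event $\mathcal{E}:=\{\cP(B_L)=1\}\cap\{\cP(B^{\mathrm{forb}}(T,\Lambda))=1\}$, where $(T,\Lambda)$ is the unique point of $\cP$ in $B_L$ on the first event. The inclusion $\mathcal{E}\subset\mathcal{A}(U,V,\delta)$ is immediate: any point in $B_L$ automatically belongs to $\mathcal{T}_{g(V)}$ with $T\in V$; and since $\psi(j,\ph)\le j$ and $h(V)\ge j$ place $\Lambda$ itself in the forbidden $\lambda$-range, the condition $\cP(B^{\mathrm{forb}}(T,\Lambda))=1$ genuinely means that no \emph{other} Poisson atom $(T_p,\lambda_p)$ with $\lambda_p\in(2^{-h(V)-1},2^{-\psi(j,\ph)}]$ has $T_p$ in $B(T,\Lambda^{\delta})$.

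Next I would perform the two routine intensity calculations. Directly, $\Lambda(B_L)=2^{-j-1}(2^{j+1}-2^j)=1/2$, and using $h(V)\le \delta(j+1)+4$ together with $\min(2\lambda^{\delta},1)\le 2\lambda^{\delta}\le 2\cdot 2^{-j\delta}$ on $B_L$,
\[
\Lambda\bigl(B^{\mathrm{forb}}(t,\lambda)\bigr)\;\le\;2\lambda^{\delta}\cdot 2^{h(V)+1}\;\le\;2\cdot 2^{-j\delta}\cdot 2^{\delta(j+1)+5}\;=\;2^{\delta+6}.
\]
Independence of $\cP$ on disjoint sets and the standard Poisson disintegration conditioned on $\cP(B_L)=1$ then yield
\[
P(\mathcal{E})\;=\;\int_{B_L}e^{-\Lambda(B_L)}\,e^{-\Lambda(B^{\mathrm{forb}}(t,\lambda)\setminus B_L)}\,\frac{dt\,d\lambda}{\lambda^2}\;\ge\;\tfrac{1}{2}\,e^{-1/2-2^{\delta+6}},
\]
so one takes $\kappa_1(\delta):=\tfrac{1}{2}\exp(-1/2-2^{\delta+6})$, which is continuous and $(0,1)$-valued on $(1,\infty)$.

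The only non-trivial point is the $j$-uniformity of the upper bound on $\Lambda(B^{\mathrm{forb}})$: it is essential that the spatial contraction $\lambda^{\delta}\sim 2^{-\delta j}$ produced by the ball radius cancels \emph{exactly} the growth $2^{h(V)+1}$ of the $\lambda$-range measure, and this is precisely why $h(V)$ was defined proportionally to $\delta j$ in the form $[\delta(g(V)+1)]+4$. The role of the lower cutoff $\psi(j,\ph)$ turns out to be inessential for the lower bound, since the negative contribution $-2^{\psi(j,\ph)}$ in the $\lambda$-integral can simply be discarded; this is why $\kappa_1$ ends up depending only on $\delta$, not on $\ph$.
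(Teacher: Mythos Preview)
Your proof is correct and follows essentially the same strategy as the paper: isolate a sub-event of $\mathcal{A}(U,V,\delta)$ of the form ``exactly one Poisson point in a target box of intensity $O(1)$, and no other point in a forbidden region of intensity $O(2^{\delta})$,'' then observe that the spatial radius $\lambda^{\delta}\sim 2^{-\delta j}$ exactly cancels the $\lambda$-range mass $2^{h(V)}\sim 2^{\delta j}$ to give a $j$-free bound. The paper does this by conditioning on $N_V=1$ and computing $\mathbb{E}[(1-\eta)^{\widetilde N_V}]$ for the number $\widetilde N_V$ of interfering points; you do it by a direct Palm/Slivnyak disintegration, which is equivalent.

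Two minor remarks. First, your restriction to the left half $V_L$ is harmless but unnecessary: nothing in your argument uses that $T$ stays away from the right boundary of $V$, since your forbidden box $B^{\mathrm{forb}}(t,\lambda)$ is not confined to the strip $V\times(0,1)$ anyway. You could just as well take $B_L=V\times(2^{-j-1},2^{-j}]$ with $\Lambda(B_L)=1$. Second, and relatedly, your choice to let $B^{\mathrm{forb}}$ follow the ball wherever it goes (possibly outside $V$) actually makes the inclusion $\mathcal{E}\subset\mathcal{A}(U,V,\delta)$ cleaner than in the paper, where the forbidden set $\widetilde S_V$ is restricted to the strip $V$ and one must implicitly use that $B(T_n,\lambda_n^{\delta})\subset V$ for $j$ large; your version avoids that boundary issue entirely.
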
 

Let us assume Lemma \ref{bbb} for the moment. By definition we have
$$
\#\mathcal{Q}(U,j,\delta)\ge \sum_{V\in \widetilde{\mathcal{G}}_j:\ V\subset U} X(U,V,\delta).
$$
The right hand side of this inequality is a binomial variable of parameters $(2^{j-g(U)},p_j(\de))$, with $p_j(\de)\ge \kappa_1(\de)>0$. Consequently, there exists a constant $\kappa(\de)>0$ satisfying 
\begin{equation}
\label{eqfin}
\mathbb{P}\Big (\sum_{V\in \mathcal{G}_j,\ V\subset U} X(U,V,\delta) \ge \kappa(\de) \cdot 2^{j-g(U)}\Big )\ge 1/2
\end{equation}
provided that  $j$ is large enough. The continuity of $\kappa$ with respect to the parameter $\de>1$ follows from the continuity of $\kappa_1$.

Let $(j_n)_{n\ge 1}$ be the sequence defined inductively by $j_1=g(U)+1$ and $j_{n+1}=(j_n+1)\delta +5$. We notice that the events $E_n$ defined for $n\geq 1$ by 
$$E_n = \{\#\mathcal{Q}(U, j_n,\delta)\ge \kappa(\de) \cdot  2^{j_n-g(U)}\}$$
are independent. Moreover,   (\ref{eqfin}) implies that $\sum_{n\ge 1}\mathbb{P}(E_n)=+\infty$. The Borel-Cantelli Lemma yields that, with probability 1, there is an infinite number of generations $j_n$ satisfying  $\#\mathcal{Q}(U,  j_n,\delta)\ge \kappa(\de) \cdot  2^{j-g(U)}$. This holds true for every $U\in \mathcal{G}_*$ almost surely, hence almost surely  for every $U\in \mathcal{G}_*$. Condition {\bf (C)} is proved.

\smallskip 

We prove    Lemma~\ref{bbb}. 
For every $V\in \mathcal{G}_*$, let us introduce the sets 
$$
S_V  =   V\times [2^{-(g(V)+1)},2^{- g(V)}]\ \mbox{ and } \ 
\widetilde S_V  =  V\times [2^{-h(V)},2^{-\psi({g(V)},\ph)}].
$$
We denote by $N_V$ and $\widetilde N_V$ respectively the cardinality of $\mathcal{P}\cap S_V$ and $\mathcal{P} \cap (\widetilde S_V \setminus S_V)$. These random variables $N_V$ and $\widetilde N_V$  are independent. We set  $l_V=\Lambda(S_V)$ and $\widetilde l_V=\Lambda(\widetilde S_V)$ ($\Lambda$ is the intensity of the Poisson point process (\ref{defpoiss})). Due to the form of the intensity $\Lambda$,  $N_V$ and $\widetilde N_V$ are  Poisson random variables of parameter $l_V=1$ and $\widetilde l_V  = 2^{-g(V)}\big (2^{h(V)} -2^{g(V)+1}+2^{g(V)}-2^{\psi({g(V)},\ph)}\big )$ respectively. Observe  that $\widetilde l_V  \le  2^{h(V)-g(V)}$ since by definition $\psi({g(V)},\ph) \le g(V)$.

We also consider   two sequences of random variables in $\R^2$  $(\xi_p=(X_p,Y_p))_{p\ge 1}$ and $(\widetilde\xi_q=(\widetilde X_q,\widetilde Y_q))_{q\ge 1}$ such that 
\begin{eqnarray*}
\mathcal{P}\cap S_V & = & \{\xi_p:\ 1\le p \le N_V\}\\
\mathcal{P}\cap (\widetilde S_V\setminus S_V) & = & \{\widetilde \xi_q:\ 1\le q\le \widetilde N_V\}.
\end{eqnarray*}
The event $\mathcal{A}(U,V,\delta)$ contains the event $\widetilde {\mathcal{A}}(U,V,\delta)$ defined as 
$$
\left\{
\,  N_V=1  \mbox{ and }  
\,  B\big (X_1, Y_1^{\delta )}\big)  \, \bigcap \, \big \{ \widetilde X_q:   1\le q\le \widetilde N_V\big\}=\{X_1\}   
\right\},
$$
where  $\xi_1=(X_1, Y_1)$. The difference between $\mathcal{A}(U,V,\delta)$ and $\widetilde {\mathcal{A}}(U,V,\delta)$ is that the latter one imposes that there is one, and only one, Poisson point in $S_V$.
We have
\begin{eqnarray*}
&&\mathbb{P}(\widetilde {\mathcal{A}}(U,V,\delta))\\
& = & \mathbb{P}\Big( \Big\{B\big (X_1, Y_1^{\delta }\big)  \bigcap  \big \{ \widetilde X_q:   1\le q\le \widetilde N_V\big\} =  \emptyset \, \Big| \, \big\{N_V=1\big\}\Big\} \Big) \\
&&  \times   \ \mathbb{P}(\{N_V=1\})\\
& = & \mathbb{P}\Big(  \Big\{\forall \, 1\le q\le \widetilde N_V, \  \widetilde X_q \not\in B\big (X_1, Y_1^{\delta }\big) \} \ \Big| \ \big\{N_V=1 \big\} \Big\}\Big) \times  e^{-1}.\end{eqnarray*}
where  $\mathbb{P}(\{N_V=1\}) = e^{-1}$  since $N_V$ is a Poisson random variable of parameter 1.
The random variables $\widetilde X_q$ are independant and uniformly 
distributed in $V$. Thus,  
\begin{eqnarray*}
&&   \mathbb{P}\Big(  \Big\{\forall \, 1\le q\le \widetilde N_V, \  \widetilde X_q \not\in B\big (X_1, Y_1^{\delta  }\big) \} \ \Big| \ \big\{N_V=1 \big\} \Big\}\Big) \\
&& \ge  \mathbb{E}\Big ( \Big [1-\frac{\ell\big (B(X_1,Y_1^{\delta   })\big )}{2^{-g(V)}}\Big ]^{\widetilde N_V}\Big ).
\end{eqnarray*}
Observe that, since $\delta>1$,  provided that $g(V)$ is large  enough,  conditionally on $\{N_V\geq 1\}$,  $\ell\big(B(X_1,Y_1^{\delta })\big )\le 2^{-g(V)\delta }$.  This implies that
\begin{eqnarray}\label{aaa}
\mathbb{P}(\widetilde {\mathcal{A}}(U,V,\delta))\ge  e^{-1} \times \mathbb{E}\Big (\big [1-2^{-g(V)(\delta-1 ))}\big ]^{\widetilde N_V}\Big ).
\end{eqnarray}

Let us define $\eta_{g(V)}=2^{-g(V)(\delta-1 )}$. Using that $\widetilde N_V$ is a Poisson random variable of parameter $\widetilde l_V$, a  classical calculus shows that (\ref{aaa}) can be rewritten as 
$$
\mathbb{P}(\widetilde {\mathcal{A}}(U,V,\delta))\ge e^{-1} e^{-\widetilde l_V \cdot \eta_{g(V)}}.
$$
By using the definition  of $h(V)=\big [(g(V)+1) \delta \big ]+4$, we
can get
\begin{eqnarray*}
\widetilde l_V \cdot  \eta_{g(V)} & \leq & 2^{h(V) -g(V))}  2^{-g(V)(\delta-1 )}\le  16 \cdot 2^\delta.
\end{eqnarray*}
Thus,  $\widetilde l_V \eta_{g(V)}$ is   bounded from above independently of $V$ by a continuous function of $\de$.  Consequently, $\mathbb{P}(\widetilde {\mathcal{A}}(U,V,\delta))$, and thus $\mathbb{P}( {\mathcal{A}}(U,V,\delta))$,  is   bounded from below by some quantity $\kappa_1(\de)$ which is strictly positive and continuously dependent on $\de>1$.  Lemma \ref{bbb} is proved.

\section{Some tangent stable L\'evy processes}
\label{Tangent}

Let $t_0\geq 0$ be fixed. Then one observes (recall Theorems
\ref{thjaff} and \ref{mr1}) that the local multifractal spectrum $D_M(t_0,.)$
of our process $M$ essentially coincides with the multifractal spectrum of
a stable L\'evy process
with L\'evy measure $\gamma(M_{t_0}) u^{-1-\gamma(M_{t_0})} du$.
A possible explanation for this is that such a stable 
L\'evy process is {\it tangent} to our process.

\begin{pro}\label{tangent}
Let $t_0\ge 0$ be fixed. Conditionally on $\mathcal{F}_{t_0}$, 
the family of processes 
$\displaystyle \Big (\frac{M_{t_0+\alpha t}-M_{t_0}}
{\alpha^{1/\gamma(M_{t_0})}}\Big )_{t\in [0,1]}$ converges in law, 
as $\alpha\to 0^+$, to a stable L\'evy 
subordinator with L\'evy measure $\gamma(M_{t_0}) u^{-1-\gamma(M_{t_0})} du$.
Here the Skorokhod space of c\`adl\`ag functions on $[0,1]$ is endowed
with the uniform convergence topology (which is stronger than
the Skorokhod topology).
\end{pro}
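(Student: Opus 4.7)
The plan is to condition on $\cF_{t_0}$, write $\beta:=\gamma(M_{t_0})$, and realize, for every $\alpha>0$, both a process $Y^\alpha$ with the correct conditional law of $\alpha^{-1/\beta}(M_{t_0+\alpha\cdot}-M_{t_0})$ and a target $\beta$-stable subordinator $L$ on a common probability space driven by the same Poisson measure. The goal becomes $\sup_{t\in[0,1]}|Y^\alpha_t-L_t|\to 0$ in probability, which immediately yields the announced convergence in law in the uniform topology, since each $Y^\alpha$ is constructed with the prescribed distribution.

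For the coupling, let $\bar N$ be a Poisson measure on $[0,1]\times[0,\infty)$ of intensity $du\,dw$, independent of $\cF_{t_0}$. Performing the change of variables $s=t_0+\alpha u$, $z=w/\alpha$ in \eqref{sde} (together with the strong Markov property at $t_0$), one checks that the SDE
\[
Y^\alpha_t=\int_0^t\int_0^\infty \alpha^{-1/\beta}G\bigl(\gamma(M_{t_0}+\alpha^{1/\beta}Y^\alpha_{u-}),w/\alpha\bigr)\,\bar N(du,dw)
\]
admits a unique solution with the required conditional law, while
\[
L_t:=\int_0^t\int_0^\infty w^{-1/\beta}\,\bar N(du,dw)
\]
is a stable subordinator with L\'evy measure $\beta u^{-1-\beta}du$ (obtained as the push-forward of $dw$ under $w\mapsto w^{-1/\beta}$), and $L_1<\infty$ a.s.

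The main tool is a pathwise sandwich. Set
\begin{align*}
L^-_\alpha(t)&:=\int_0^t\int_0^\infty(\alpha+w)^{-1/\beta}\,\bar N(du,dw),\\
L^+_\alpha(t)&:=\alpha^{1/\widetilde\beta_\alpha-1/\beta}\int_0^t\int_0^\infty(\alpha+w)^{-1/\widetilde\beta_\alpha}\,\bar N(du,dw),
\end{align*}
with $\widetilde\beta_\alpha:=\gamma(M_{t_0}+\alpha^{1/\beta}Y^\alpha_{1-})\in[\beta,1-\ep]$; these are the natural rescalings through the same change of variables of the stable subordinators $X^\beta$ and $X^{\widetilde\beta_\alpha}$ from Proposition \ref{comp}, built with $\bar N$. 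Proposition \ref{comp}, combined with the monotonicity of $\gamma\circ M$ on $[t_0,t_0+\alpha]$, then gives $L^-_\alpha(t)\leq Y^\alpha(t)\leq L^+_\alpha(t)$ for every $t\in[0,1]$. Each summand $(\alpha+w_n)^{-1/\beta}$ in $L^-_\alpha(1)$ is non-decreasing as $\alpha\downarrow 0$ with limit $w_n^{-1/\beta}$, so monotone convergence yields $L^-_\alpha(1)\uparrow L_1$ a.s.; since $L-L^-_\alpha$ is non-decreasing in $t$, $\sup_{t\in[0,1]}(L-L^-_\alpha)(t)=(L-L^-_\alpha)(1)\to 0$ almost surely. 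From $|Y^\alpha-L|\leq (L^+_\alpha-L^-_\alpha)+(L-L^-_\alpha)$, the proof reduces to $(L^+_\alpha-L^-_\alpha)(1)\to 0$ in probability.

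This last step is the main technical obstacle. A direct computation yields the atom-wise identity
\[
\alpha^{1/\widetilde\beta_\alpha-1/\beta}(\alpha+w)^{-1/\widetilde\beta_\alpha}-(\alpha+w)^{-1/\beta}=(\alpha+w)^{-1/\beta}\bigl[(1+w/\alpha)^{1/\beta-1/\widetilde\beta_\alpha}-1\bigr],
\]
to which I would apply the elementary bound $(1+x)^\theta-1\leq \theta(1+x)^\theta\log(1+x)$ for $\theta\in[0,1]$ and split the sum over atoms of $\bar N$ at $w_n=\alpha^{-1/2}$. On $\{w_n\leq\alpha^{-1/2}\}$ the logarithmic factor is $O(\log(1/\alpha))$ and the total contribution is bounded by $C(\widetilde\beta_\alpha-\beta)\log(1/\alpha)\,L_1$, while Campbell's formula shows the $\{w_n>\alpha^{-1/2}\}$ contribution has expectation $O(\alpha^{(1-\beta)/(2\beta)+o(1)})$ and hence vanishes in probability. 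The conclusion then follows from $(\widetilde\beta_\alpha-\beta)\log(1/\alpha)\to 0$ in probability, itself a consequence of the Lipschitz property of $\gamma$ and the upper bound in Proposition \ref{comp}: $\widetilde\beta_\alpha-\beta\leq C(M_{(t_0+\alpha)-}-M_{t_0})\leq C[X^{\widetilde\beta_\alpha}_{t_0+\alpha}-X^{\widetilde\beta_\alpha}_{t_0}]=\mathcal{O}_{\mathbb{P}}(\alpha^{1/\beta})$ by self-similarity of the stable subordinator and tightness. The interplay between the vanishing $\widetilde\beta_\alpha-\beta$ and the logarithmic blow-up of $\log(1+w/\alpha)$ for large $w$ is precisely what requires care; the truncation at $\alpha^{-1/2}$, together with the almost sure (but only barely) finiteness of $L_1$---whose expectation is infinite---are what make the estimate close.
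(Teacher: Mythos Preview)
Your coupling-and-sandwich approach is sound and genuinely different from the paper's argument. The paper works on the original time scale: it compares $M$ with the L\'evy process $L_t=\int_0^t\int_0^\infty (1+z)^{-1/\gamma(0)}N(ds,dz)$, passes to the stable target $S$ through an equality in law and the observation that $\mathbb{P}\bigl[(\alpha^{-1/\gamma(0)}\tilde L_{\alpha t})=(\alpha^{-1/\gamma(0)}S_{\alpha t})\bigr]\to 1$, and finally proves $\alpha^{-1/\gamma(0)}\sup_{[0,\alpha]}|M-L|\to 0$ in probability via a stopping time $\tau_\eta=\inf\{t:\gamma(M_t)>\gamma(0)+\eta\}$ and $L^\beta$-moment estimates. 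Your route rescales first and couples $Y^\alpha$ directly with the stable limit $L$ on $[0,1]$ via the sandwich $L^-_\alpha\le Y^\alpha\le L^+_\alpha$; this is conceptually cleaner (no chain of intermediate processes, no equality-in-law step) but pushes the difficulty into controlling the random $\widetilde\beta_\alpha$.

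Two points in your write-up need repair. First, you cannot apply Campbell's formula to the $\{w_n>\alpha^{-1/2}\}$ contribution as written, because the integrand $\alpha^{1/\widetilde\beta_\alpha-1/\beta}(\alpha+w)^{-1/\widetilde\beta_\alpha}$ depends on $\widetilde\beta_\alpha$, which is itself a functional of the whole Poisson measure $\bar N$. The fix is exactly the localization the paper uses with its stopping time: first show $\mathbb{P}(\widetilde\beta_\alpha>\beta+\eta)\to 0$ for each fixed $\eta>0$, then on $\{\widetilde\beta_\alpha\le\beta+\eta\}$ bound the integrand by the deterministic $\alpha^{-C\eta}w^{-1/(\beta+\eta)}$ (valid for $w>\alpha^{-1/2}$ and small $\alpha$), to which Campbell applies and gives an expectation of order $\alpha^{(1/(\beta+\eta)-1)/2-C\eta}\to 0$ for $\eta$ small.

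Second, your justification of $\widetilde\beta_\alpha-\beta=\mathcal{O}_{\mathbb{P}}(\alpha^{1/\beta})$ is incorrect: the processes $X^\alpha$ have \emph{truncated} L\'evy measure $\alpha u^{-1-\alpha}\indiq_{[0,1]}(u)\,du$, so they are not self-similar, and in any case $\widetilde\beta_\alpha$ is random so ``self-similarity of $X^{\widetilde\beta_\alpha}$'' has no clear meaning. The conclusion you need is however immediate and stronger: since $G(\gamma(y),z)\le (1+z)^{-1/(1-\ep)}$, one has $\E[M_{t_0+\alpha}-M_{t_0}\mid\cF_{t_0}]\le C\alpha$, hence $\widetilde\beta_\alpha-\beta\le C(M_{(t_0+\alpha)-}-M_{t_0})=\mathcal{O}_{\mathbb{P}}(\alpha)$, which already gives $(\widetilde\beta_\alpha-\beta)\log(1/\alpha)\to 0$ in probability. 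This also supplies the localization needed in the previous paragraph.
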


One might conjecture that under many restrictive conditions, a result of the
following type might hold: if a process $(M_t)_{t\in [0,1]}$ has a 
tangent process $(Y^{t_0}_t)_{t\in [0,1]}$ 
at time $t_0$, then $D_M(t_0,.)$ coincides with the multifractal spectrum
of $Y^{t_0}$.
This would allow, 
for example, to generalize our results to the study of the multifractal
spectrum of any reasonable jumping S.D.E. (which is always tangent, in some
sense, to a L\'evy process).

\begin{proof}
Using the Markov property, it suffices to treat the case $t_0=0$.
Let thus $N(ds,dz)$ be a Poisson measure on $[0,1]\times[0,\infty)$ with
intensity measure $dsdz$. Recall that
\begin{eqnarray*}
M_t=\int_0^t\int_0^\infty (1+z)^{-1/\gamma(M_\sm)} N(ds,dz)
\end{eqnarray*}
and introduce the L\'evy processes 
\begin{eqnarray*}
&L_t=\intot\int_0^\infty (1+z)^{-1/\gamma(0)}N(ds,dz), \quad
\tilde L_t= \int_0^t\int_1^\infty z^{-1/\gamma(0)}N(ds,dz), \\
&
S_t= \intot\int_0^\infty z^{-1/\gamma(0)}N(ds,dz).
\end{eqnarray*}
One immediately checks that $(L_t)_{t\in [0,1]}$ and  $(\tilde L_t)_{t\in [0,1]}$
have the same law, and that
$(S_t)_{t\in [0,1]}$ is a stable L\'evy process with L\'evy measure 
$\gamma(0) u^{-1-\gamma(0)} du$. Thus our aim is to prove that
$(\alpha^{-1/\gamma(0)}M_{\alpha t})_{t\in[0,1]}$ tends in law to 
$(S_t)_{t\in [0,1]}$.
First, $(\alpha^{-1/\gamma(0)}S_{\alpha t})_{t\in[0,1]}$ 
has the same law as $(S_t)_{t\in[0,1]}$ for each $\alpha>0$.
Next, it is clear that
$$\mathbb{P}  \Big[(\alpha^{-1/\gamma(0)}\tilde L_{\alpha t})_{t\in [0,1]} 
= (\alpha^{-1/\gamma(0)}S_{\alpha t})_{t\in [0,1]} \Big] \geq \mathbb{P}\Big[N([0,\alpha]\times 
[0,1])=0\Big] =e^{-\alpha},$$ which
tends to $1$ as $\alpha$ tends to $0$. 

We will now show that $\alpha^{-1/\gamma(0)} \Delta_{\alpha}$
tends to $0$ in probability, where
$$
\Delta_{t}:=\sup_{[0,t]} |M_{s} - L_{s}|
=\int_0^{t}\int_0^\infty [(1+z)^{-1/\gamma(M_\sm)}-(1+z)^{-1/\gamma(0)}]N(ds,dz),
$$
and this will conclude the proof.
A first computation, using that $\gamma(y)\leq 1-\e<1$ by assumption, 
shows that for all $t\geq 0$,
$$
\E[M_t]= \int_0^t \int_0^\infty \E[ (1+z)^{-1/\gamma(M_s)}] dzds \leq
\int_0^t \int_0^\infty (1+z)^{-1/(1-\e)} dzds \leq C t
$$ 
for some constant $C$.
Next, we introduce, for $\eta>0$, the stopping time $\tau_\eta=\inf\{t\geq 0:\;
\gamma(M_t)>\gamma(0)+\eta\}$. Denoting by $A$ the Lipschitz constant of
$\gamma$, one easily gets 
$$
\mathbb{P} [\tau_\eta<\alpha] \leq \mathbb{P} [M_{\alpha} \geq \eta/A] 
\leq (A/\eta) \E[M_{\alpha}] \leq C A\alpha/\eta=C_{\eta} \alpha.
$$
Now for $\beta \in (\gamma(0)+\eta,1]$, by subadditivity, one obtains,
for all $t\geq 0$,
\begin{eqnarray*}
\E[M_{t\land \tau_\eta}^\beta]&\leq& \E\left[\int_0^{t\land \tau_\eta} \int_0^\infty 
(1+z)^{-\beta/\gamma(M_\sm)} N(ds,dz) \right]\\
&=& \E\left[\int_0^{t\land \tau_\eta} \int_0^\infty 
(1+z)^{-\beta/\gamma(M_s)} dzds \right]\\
&\leq&  
\E\left[\int_0^{t\land \tau_\eta} \int_0^\infty (1+z)^{-\beta/(\gamma(0)+\eta)} 
dzds \right] \leq C_{\eta,\beta} t.
\end{eqnarray*}
Let us introduce for $m\geq 0$ the quantity   $\kappa(m):=1/\gamma(0)-1/\gamma(m)\geq 0$. Still  by subadditivity, for $\beta \in (\gamma(0)+\eta,1]$, for $t\geq 0$, we have
\begin{eqnarray*}
\E[\Delta_{t\land \tau_\eta}^\beta ] &\leq & 
\E \left[\int_0^{t\land \tau_\eta}\int_0^\infty 
[(1+z)^{-1/\gamma(M_s)}-(1+z)^{-1/\gamma(0)}]^\beta dzds\right]\\
&\leq & \E \left[\int_0^{t\land \tau_\eta}\int_0^{2^{1/\kappa(M_s)}-1} 
(1+z)^{-\beta/\gamma(0)}  [(1+z)^{\kappa(M_s)}-1]^\beta dz
ds\right]\\
&&+ \E \left[\int_0^{t\land \tau_\eta}\int_{2^{1/\kappa(M_s)}-1}^\infty 
(1+z)^{-\beta/\gamma(M_s)} dzds\right]=:I_t^{\beta,\eta}+J_t^{\beta,\eta} .
\end{eqnarray*}
But for $z\leq 2^{1/\kappa(m)}-1$, there holds
$(1+z)^{\kappa(m)}-1 \leq C \kappa(m) \log(1+z) \leq C m \log(1+z)$, the last inequality following from the facts that  $\kappa(0)=0$ and   $\kappa$ is Lipschitz-continuous.
Hence
\begin{eqnarray*}
I_t^{\beta,\eta}&\leq & \E \left[\int_0^{t\land \tau_\eta} 
C M_s^\beta \int_0^\infty (1+z)^{-\beta/\gamma(0)}(\log(1+z))^\beta dzds \right]
\leq C_\beta \E \left[\int_0^{t\land \tau_\eta} M_s^\beta ds \right]\\
&\leq & 
C_\beta \int_0^t \E\left[ M_{s\land \tau_\eta}^\beta \right] ds \leq C_{\eta,\beta} 
t^2.
\end{eqnarray*}
Next, $\beta/\gamma(M_s) \geq \beta/(\gamma(0)+\eta)>1$ on $[0,\tau_\eta)$,
whence (since $2^{-ax}\leq C_a x$ for all $x>0$),
\begin{eqnarray*}
J_t^{\beta,\eta} &\leq &  C_{\eta,\beta}\E \left[\int_0^{t\land \tau_\eta}  
2^{[1-\beta/(\gamma(0)+\eta)]/\kappa(M_s)}
ds \right] \leq  C_{\eta,\beta}\E \left[\int_0^{t\land \tau_\eta} \kappa(M_s) 
ds \right] \\
&\leq& C_{\eta,\beta}\E \left[\int_0^{t\land \tau_\eta}  M_s 
ds \right] \leq C_{\eta,\beta} \int_0^t \E\left[ M_{s\land \tau_\eta} \right] ds 
\leq C_{\eta,\beta} t^2.
\end{eqnarray*}
Again, we used here that $\kappa(0)=0$ and  that
$\kappa$ is Lipschitz-continuous. As a conclusion, 
$\E[\Delta_{t\land \tau_\eta}^\beta ]\leq C_{\eta,\beta} t^2$.

We may now conclude that for all $\delta>0$, all $\alpha>0$, all $\eta>0$,
all $\beta \in (\gamma(0)+\eta,1]$,
\begin{eqnarray*}
\mathbb{P} [\alpha^{-1/\gamma(0)}\Delta_{\alpha} \geq \delta] &\leq & 
\mathbb{P} [\tau_\eta\geq \alpha] + \mathbb{P} [\Delta_{\alpha \land \tau_\eta}
\geq \delta \alpha^{1/\gamma(0)}]\\
&\leq & C_{\eta} \alpha 
+ (\delta \alpha^{1/\gamma(0)})^{-\beta}E[ \Delta_{\alpha \land \tau_\eta}^\beta]
\leq  C_{\eta} \alpha + C_{\eta,\delta,\beta} \alpha^{2-\beta/\gamma(0)}.
\end{eqnarray*}
Choosing $\eta=\min(\gamma(0),1-\gamma(0))/2$ and then 
$\beta \in (\gamma(0)+\eta,1\land 2\gamma(0))$,
we deduce that  $2-\beta/\gamma(0)>0$, so that 
$\alpha^{-1/\gamma(0)}\Delta_{\alpha T}$ tends to $0$ in probability when $\alpha$ goes to 0, 
which was our aim.
\end{proof}



\begin{thebibliography}{99}
\rm

\bibitem{AYAJAFFTAQ} A. Ayache, S Jaffard, M. Taqqu, 
Wavelet construction of generalized multifractional processes.  
Rev. Mat. Iberoamericana  23 (1), 327--370, 2007.

\bibitem{BA1} J. Barral, 
Continuity of the multifractal spectrum
of a random statistically self-similar measures,
J. Theor. Probab. 13, 1027--1060, 2000.



\bibitem{BA2} J. Barral, B.B. Mandelbrot, 
Random multiplicative multifractal measures, 
Fractal {G}eometry and {A}pplications, Proc. Symp. Pure Math., AMS, Providence, {RI}, 2004.


\bibitem{BS4}{J. Barral \and S. Seuret, {Heterogeneous ubiquitous systems in $\R^d$ and Hausdorff dimensions}, Bull. Brazilian Math. Soc., 38 (3), 467-515, 2007.}    

\bibitem{BS1}{J. Barral \and S. Seuret, {The singularity spectrum of L\'evy processes in multifractal time}, Adv.  Math., 14 (1), 437-468, 2007.}

\bibitem{BS2}{J. Barral \and S. Seuret, {The multifractal nature of heterogeneous sums of Dirac masses},    Math. Proc. Cambridge Ph. Soc. 144 (3) 707-727, 2008.  }


\bibitem{BS3}{J. Barral \and S. Seuret, {A localized Jarnik-Besicovitch theorem}, Preprint, 2008.  ArXiv:0903.2215v1.} 

\bibitem{b}{J. Bertoin, {On nowhere differentiability for L\'evy 
processes}, Stochastics and stochastics reports,
50, 205--210, 1994.}



\bibitem{BMP} G. Brown, G. Michon,
J. Peyri\`ere,  {On the multifractal analysis of measures},
J. Stat. Phys., 66, 775--790, 1992.




\bibitem{durand}
A. Durand, Random wavelet series based on a tree-indexed Markov chain,
Comm. Math. Phys., 283, 451--477, 2008.

\bibitem{ek}
S.N. Ethier, T.G. Kurtz, Markov processes. Characterization and convergence. 
Wiley Series in Probability and Mathematical Statistics: Probability and Mathematical Statistics. John Wiley and Sons, Inc., New York, 1986.


\bibitem{f}{K. Falconer, {Fractal geometry. Mathematical foundations 
and applications.}, Second edition. John Wiley and Sons, Inc., Hoboken, NJ, 
2003.}



\bibitem{FP} U. Frisch, G. Parisi, {Fully
developped turbulence and intermittency}, Proc. International Summer
school Phys., Enrico Fermi, 84-88, North Holland, 1985.

\bibitem{iw}N. Ikeda, S. Watanabe, {\it Stochastic differential equations 
and diffusion processes}, North-Holland Mathematical Library, 24. 
North-Holland Publishing Co., Amsterdam-New York; Kodansha, 
Ltd., Tokyo, 1981.



\bibitem{JAFFOLD} {S. Jaffard}.
Old Friends Revisited : the Multifractal Nature of Some Classical Functions.
\emph{J.   Fourier Anal.   Appl.} 3 (1), 1--22, 1997.

\bibitem{j}{S. Jaffard, {The multifractal nature of L\'evy processes},
Probability Theory and Related Fields, 114 (2), 207--227, 1999.}

\bibitem{j2}{S. Jaffard, {On lacunary wavelet series},  
Ann. Appl. Probab., 10  (1), 313--329, 2000.}

\bibitem{ot}{S. Orey, S.J. Taylor, How often on a Brownian path does the law 
of the iterated logarithm fail? Proc. London Math. Soc. 28 (3), 174-192, 1974.
}

\bibitem{perkins}{E. Perkins, {On the Hausdorff dimension of the Brownian slow points}, Z. Wahrsch.  Verw Geb.   64, 369--389, 1983.}

\bibitem{shepp}{L.A. Shepp, {Covering the line with random intervalls},
Z. Wahrsch. Verw. Gebiete 23, 163--170, 1972.}

\end{thebibliography}
\end{document}